\date{\today}
\newtheorem{theorem}{Theorem}[section]
\newtheorem{lemma}[theorem]{Lemma}
\theoremstyle{definition}
\newtheorem{definition}[theorem]{Definition}
\newtheorem{example}[theorem]{Example}
\newtheorem{proposition}[theorem]{Proposition}
\newtheorem{corollary}[theorem]{Corollary}
\theoremstyle{remark}
\newtheorem{remark}[theorem]{Remark}
\numberwithin{equation}{section}
\DeclareMathOperator {\Map}{Map}
\def \Z {\mathbb{Z}}
\def \C {\mathbb{C}}
\def \R {\mathbb{R}}
\def \O {\mathcal{O}}
\def \T {\mathbb{T}}
\newcommand{\git}{/\!\!/}
\newcommand{\pt}{\text{\textnormal{pt}}}
\begin{document}
\title{Twisted equivariant quasi-elliptic cohomology and M-brane charge} 

\author{Zhen Huan}

\address{Zhen Huan, Center for Mathematical Sciences,
Huazhong University of Science and Technology, Hubei 430074, China} \curraddr{}
\email{2019010151@hust.edu.cn}

\date{\today}

\keywords{Elliptic cohomology. twisted $K$-theory. Loop spaces. Chern character}
\subjclass[2020]{Primary: 55N34; Secondary 55N15, 19L50}

\begin{abstract}    

In this paper we construct a twisted version of quasi-elliptic cohomology \cite{huan2018}. This theory can be constructed as a K-theory of formal loop spaces. After establishing basic properties of the theory, including restriction, change-of-group and induction maps, we construct the Chern character map. 
And we compute the twisted quasi-elliptic cohomology theories of representation 4-spheres acted on by the finite subgroups of $SU(2)$, which, as conjectured in \cite{SatiSchreiber2022}, can produce good observables on M-brane charge in a Tate-elliptic enhancement of D-brane charge in twisted equivariant K-theory.

\end{abstract}

\maketitle

\section{Introduction}

    In this paper we construct a twisted version of elliptic cohomology. Quasi-elliptic cohomology, introduced by the author in \cite{huan2018}, is a variant of Tate K-theory, which is the generalized elliptic cohomology associated to
the Tate curve. The Tate curve $Tate(q)$ is a generalized  elliptic curve
over Spec$\mathbb{Z}((q))$, which is classified as the completion
of the algebraic stack of some nice generalized elliptic curves at
infinity 
\cite[Section 2.6]{ando2001}. The relation between quasi-elliptic cohomology and Tate K-theory can be expressed by \begin{equation}QEll^*(X/\!\!/G)\otimes_{\mathbb{Z}[q^{\pm}]}\mathbb{Z}((q))\cong K^*_{Tate}(X/\!\!/G).\label{relqelltatek}\end{equation}Quasi-elliptic cohomology is not an elliptic cohomology but it contains all the information of equivariant
Tate K-theory. That’s how it got its name.
In addition, it can be constructed as the orbifold K-theory of a loop groupoid $\Lambda(X/\!\!/G)$, which partially proved a conjecture by Witten \cite{landweber1988} emphasizing the relation between elliptic cohomology and circle-equivariant K-theory of a free loop space. 

One classical example of twisted cohomology theories is twisted K-theory \cite{karoubi1970b, atiyahsegal2005}, which admits a geometric construction. The relation between twisted K-theory and physics has been observed for decades. It is conjectured in \cite{Maldacena2001DBraneIA} that it can classify D-branes, Ramond-Ramond field strengths and  spinors in type II string theory under some conditions. Since elliptic cohomology is a higher version of K-theory, it is natural to expect a relation between a twisted version of elliptic cohomology and physics. 

Let $G$ be a finite group acting on a space $X$. The twisting that we use to construct twisted quasi-elliptic cohomology $QEll^{\alpha + \ast}(-)$ corresponds to an element $\alpha \in H^3(BG; U(1))$. Motivated by the relation between elliptic cohomology and loop groupoid, we construct  $QEll^{\alpha + \ast}(-)$ in Section \ref{twistedqell}  as the orbifold K-theory of 
a twisted version of the loop groupoid $\Lambda(X/\!\!/ G)$, which embodies both the loop rotation and  the central extension classified by the 2-cocycle obtained from the loop transgression of the twist $\alpha$.
 We  prove that $QEll^{\alpha + \ast}(-)$ has many parallels with twisted equivariant $K$-theory, including the existence of K\"{u}nneth maps, induction maps, change-of-group isomorphisms and Chern characters. 
It has the relation with the twisted equivariant Tate K-theory $K^{\alpha + \ast}_{Tate}(-/\!\!/G)$ defined in \cite{dove2019} as below.
\[QEll^{ \alpha + \ast }_G(X)\otimes_{\mathbb{Z}[q^{\pm}]}\mathbb{Z}((q))\cong K^{ \alpha + \ast}_{Tate}(X/\!\!/G)\]

The author gives a loop space construction of quasi-elliptic cohomology via bibundles in
\cite[Section 2]{huan2018}. We recall the model $Loop(X/\!\!/G)$ in  Section \ref{qellloop}, which is constructed from the category of bibundles from $S^1/\!\!/\ast$ to $X/\!\!/G$ enriched by the rotation of loops. In Section \ref{twistedloopspace}, we construct a groupoid $Loop^{twist}(X/\!\!/ G)$ of twisted equivariant loops. A subgroupoid of it consisting of constant loops in $Loop^{twist}(X/\!\!/ G)$  provides a loop space model for twisted quasi-elliptic cohomology, thus, for twisted equivariant Tate K-theory as well.

Sati and Schreiber conjecture in \cite{SatiSchreiber2022} that the twisted quasi-elliptic cohomology of representation 4-spheres of finite subgroups of $SU(2)$ can produce good observables on M-brane charges in a Tate-elliptic enhancement of D-brane charge in twisted equivariant K-theory. In Section \ref{ex:qell:s4} and \ref{ex:twisted:qell:s4}, we show the study of this conjecture is feasible mathematically by computing the twisted quasi-elliptic cohomology theories of representation 4-spheres acted on by any finite subgroup of $SU(2)$. 
The theories are computed by applying the properties of quasi-elliptic cohomology theories and  equivariant K-theories, especially the conclusions in Appendix \ref{Cor:decomp:dcl}, which are corollaries of the decomposition formula in \cite[Theorem 3.6 and Corollary 3.7]{ngel2017EquivariantCB}.
As these computations show, twisted quasi-elliptic cohomology groups appear like twisted equivariant K-theory groups as expected for D-branes on orbifolds in type II string theory but subject to certain curious adjustment coming from a circle action, much as expected for M-theoretic corrections under its duality to type IIB under a circle fibration. This is plausibly indicative of the suggested relation to M-brane charge that was mentioned in the introduction. Further discussion of this point is however beyond the scope of this article.

In Section \ref{qell} we give a sketch of quasi-elliptic cohomology, including its definition, basic properties, and the loop space construction. In Section \ref{devotoequiell}, we review Devoto's equivariant elliptic cohomology. In Section \ref{twistedellold}, we recall the definition of twisted equivariant elliptic cohomology. In Section \ref{twistedqell}, we construct twisted quasi-elliptic cohomology. In Section \ref{twistedloopspace}, we define a model of twisted loop space, with which we can construct twisted quasi-elliptic cohomology. In Section \ref{twistedChernS}, based on the Chern character of quasi-elliptic cohomology, 
we construct a Chern character map of twisted quasi-elliptic cohomology. 
In Section \ref{compute:twisted:qell}, we compute more examples of quasi-elliptic cohomology and twisted quasi-elliptic cohomology. Especially in Section \ref{ex:qell:s4} and \ref{ex:twisted:qell:s4}, we compute the twisted quasi-elliptic cohomology of representation 4-spheres acted on by all the finite subgroups of $SU(2)$.

\section*{Acknowledgments} 

This research is based upon the work  supported by the Young Scientists Fund of the National Natural Science Foundation of China (Grant No. 11901591), the General Program of the National Natural Science Foundation of China (Grant No. 12371068)  for the project 
"Quasi-elliptic cohomology and its application in topology and mathematical physics", the National Science Foundation under Grant Number DMS 1641020,  and the research funding from Huazhong University of Science and Technology.

The research was inspired during the Mathematical Research Community (MRC) workshop at Rhode Island in June 2019. The author feels indebted to Dileep Menon, Matthew Bruce Young for valuable supports and advices, especially the organisers Nora Ganter and Daniel Berwick-Evans.
The author thanks Hisham Sati and Urs Schreiber for suggesting the author compute twisted quasi-elliptic cohomology of spheres, and thanks Center for Quantum and Topological Systems at New York University Abu Dhabi for hospitality and support. In addition, the author thanks the Max Planck Institute for Mathematics for hospitality and support. And the author thanks the helpful comments from the referees.

This project was originally intended as a collaboration with M. Spong, and the author is thankful for his insight and many deep conversations about the subject.


\section{Quasi-elliptic cohomology}\label{qell}

In this section we review quasi-elliptic cohomology, the main reference of which is \cite{huan2018}. 
 It is a variant of Tate
K-theory \cite{ando2001} \cite{ganter2014}. Many constructions in it can be made neater than Tate K-theory and most elliptic cohomology theories. As shown in Section
\ref{qellloop}, it can be constructed as orbifold K-theory of a loop space.

\subsection{Definition} \label{qelldef}
In this section we recall the definition of quasi-elliptic
cohomology in term of equivariant K-theory and state the
conclusions that we need in this paper. For more details on quasi-elliptic
cohomology, please refer to \cite{huan2018}. 

Let $G$ be a  group and  $X$ a $G$-space. Let $G^{tors}\subseteq G$ denote the set of
torsion elements of $G$. For any $g\in G^{tors}$, the fixed point
space $X^{g}$ is a $C_G(g)-$space where $C_G(g)$ is the centralizer $\{h\in G\mid h g= g h\}$. This group action can be extended to that by the group
\[\Lambda_G(g):=C_G(g)\times \mathbb{R}/\langle(g, -1)\rangle,\] which is given explicitly by 
\[[h, t]\cdot x:=h \cdot x,\] for any $[h, t] \in \Lambda_G(g)$ and $x\in X^{g}$. Let $\mathbb{T}$ denote the circle group $\mathbb{R}/\mathbb{Z}$. 

The inertia orbifold  of $X\git G$ is defined to be the functor groupoid \[ \hom_{grpd}(B\Z, X\git G). \]  The
inertia orbifold plays an import role in the geometry and index theory of orbifolds \cite{Adem_Leida_Ruan_2007}\cite{PFLAUM200783}. The rotation action of the circle group $\mathbb{T}$ can be built into it as new morphisms, as we see in Definition \ref{huan:inertia:extended}.

To give a complete description of the extended inertia orbifold $\Lambda(X/\!\!/G)$, we need the following definitions.

\begin{definition} 
\begin{enumerate}\item Let $g$, $g'$ be two elements in $G$. Define $C_G(g, g')$ to be the set 
$\{ h\in G\mid g'h=hg\}.$

\item Let $\Lambda_G(g, g')$ denote
the quotient of $C_G(g, g')\times \mathbb{R}/l\mathbb{Z}$ under
the equivalence $$(\alpha, t)\sim (g'\alpha, t-1)=(\alpha g,
t-1),$$ where $l$ is the order of $g$ in $G$. \end{enumerate} \end{definition}

\begin{definition} Define $\Lambda(X/\!\!/G)$ to be the groupoid with
\begin{itemize}
\item \textbf{objects}: the space $\coprod\limits_{g\in G^{tors}}X^{g}$
\item 
\textbf{morphisms}: the space
$$\coprod\limits_{g, g'\in G^{tors}}\Lambda_G(g, g')\times X^g.$$ \end{itemize}
For an object $x\in X^g$, the morphism
$([\alpha, t], x)\in \Lambda_G(g, g')\times X^g$ is an arrow from $x$ to $\alpha \cdot x\in X^{g'}.$ The composition of the morphisms is
defined by
\begin{equation}([\alpha_1, t_1], \alpha_2\cdot x) \circ ([\alpha_2,
t_2], x) = ([\alpha_1\alpha_2, t_1+t_2], x).\end{equation} 
We have a
homomorphism of orbifolds
$$\pi: \Lambda(X/\!\!/G)\longrightarrow B\mathbb{T}$$ sending all the objects to the single object in $B\mathbb{T}$, and
a morphism $([\alpha,t], x)$ to $e^{2\pi it}$ in $\mathbb{T}$.
\label{huan:inertia:extended}
\end{definition}

     An intrinsic version of Definition \ref{huan:inertia:extended} is provided in \cite[Theorem 2.3]{SatiSchreiber2022}.

\begin{definition} The quasi-elliptic cohomology $QEll^*_G(X)$ is defined to be
$K^*_{orb}(\Lambda(X/\!\!/G))$. \label{defqell1}\end{definition}

The groupoid $\Lambda(X/\!\!/G)$ is equivalent to the disjoint union of action groupoids \[\coprod_{g\in G^{tors}_{conj}} X^g \git \Lambda_G(g) \] where $G^{tors}_{conj}$ is the set of a family of representatives of the $G$-conjugacy classes in $G^{tors}$. Thus,
we can unravel Definition \ref{defqell1} and
express it via equivariant K-theory.


\begin{definition}
\begin{equation}\label{defqell} QEll^*_G(X):=\prod_{g\in
G^{tors}_{conj}}K^*_{\Lambda_G(g)}(X^{g})=\bigg(\prod_{g\in
G^{tors}}K^*_{\Lambda_G(g)}(X^{g})\bigg)^G,\end{equation} where
$G^{tors}_{conj}$ is a set of representatives of $G-$conjugacy
classes in $G^{tors}$.
\end{definition}

\begin{example}
When the group $G$ is the trivial group $e$, by definition, the quasi-elliptic cohomology $QEll^*_{e}(X)$ has only one factor and it is equivalent to $\mathbb{T}$-equivariant K-theory.
\end{example}

As indicated in \cite[Section 5]{Kitchloo_Morava_2007} and \cite[Section 5.2]{lurie2009b}, Tate K-theory can be expressed as the completion of $\mathbb{T}$-equivariant K-theory of free loop spaces. Explicitly, \[ K^*_{Tate}(X) \cong K_{\mathbb{T}}^*(X)\otimes_{\mathbb{Z}[q^{\pm}]} \mathbb{Z}((q)),\] 
where $X$ is a space with trivial $\mathbb{T}$-action.
For equivariant Tate K-theory, we have a further conclusion generalizing this relation.

Consider the composition 
$$\mathbb{Z}[q^{\pm}]=K^0_{\mathbb{T}}(\mbox{pt})\buildrel{\pi^*}\over\longrightarrow K^0_{\Lambda_G(g)}(\mbox{pt})\longrightarrow
K^0_{\Lambda_G(g)}(X)$$ where $\pi: \Lambda_G(g)\longrightarrow
\mathbb{T}$ is the projection $[a, t]\mapsto e^{2\pi i t}$ and the
second map is defined via the collapsing map $X\longrightarrow \mbox{pt}$. Via it, 
$QEll_G^*(X)$ is naturally a
$\mathbb{Z}[q^{\pm}]-$algebra. 

\begin{proposition}The relation between quasi-elliptic cohomology and equivariant 
Tate K-theory $K^*_{Tate}(- \git G)$ is
\begin{equation}QEll^*_G(X) \otimes_{\mathbb{Z}[q^{\pm}]}\mathbb{Z}((q)) \cong K^*_{Tate}(X\git G).
\label{tateqellequiv}\end{equation}\end{proposition}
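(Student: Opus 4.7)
The plan is to verify the isomorphism sector-by-sector over conjugacy classes of torsion elements. By the formula \eqref{defqell}, $QEll^*(X/\!\!/G)$ decomposes as $\prod_{g \in G^{tors}_{conj}}K^*_{\Lambda_G(g)}(X^g)$, and since base change along $\mathbb{Z}[q^\pm]\to\mathbb{Z}((q))$ commutes with the finite product, it reduces to checking for each $g$ that
\begin{equation*}
K^*_{\Lambda_G(g)}(X^g)\otimes_{\mathbb{Z}[q^\pm]}\mathbb{Z}((q))
\end{equation*}
is canonically isomorphic to the corresponding $g$-sector of $K^*_{Tate}(X/\!\!/G)$ in its standard loop-space description (cf.\ \cite{AHS, Gan14}).

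I would next analyze $K^*_{\Lambda_G(g)}(X^g)$ via the central extension
$1 \to C_G(g) \to \Lambda_G(g) \to \mathbb{T} \to 1$
induced by the projection $\pi$. The centrality of $\mathbb{T}$ forces each irreducible representation of $\Lambda_G(g)$ to decompose as a pair $(\rho,k)$ with $\rho$ an irreducible representation of $C_G(g)$ on which $g$ acts as the scalar $e^{2\pi i j/n}$ (where $n=|g|$), and with $\mathbb{T}$-weight $k\in j/n+\mathbb{Z}$. Consequently, as a $\mathbb{Z}[q^\pm]$-module, $K^*_{\Lambda_G(g)}(X^g)$ decomposes according to the $\langle g\rangle$-eigenspace decomposition of $K^*_{C_G(g)}(X^g)$, each eigenspace shifted by an appropriate fractional power $q^{j/n}$. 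Extending scalars to $\mathbb{Z}((q))$ then assembles these contributions into the standard Laurent-series expressions appearing in Tate K-theory at the $g$-sector.

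The main technical obstacle is reconciling the fractional powers $q^{j/n}$ intrinsic to $R(\Lambda_G(g))$ with the integer-power ring $\mathbb{Z}((q))$: these fractional powers must be interpreted as formal basis vectors for a free $\mathbb{Z}[q^\pm]$-module, and after base change to $\mathbb{Z}((q))$ the resulting module is canonically isomorphic to the expected $g$-sector via the matching of $\langle g\rangle$-eigenspaces on both sides. Once this identification is made explicit at each sector and shown to be compatible with the conjugation action implicit in the second product in \eqref{defqell}, the global isomorphism follows.
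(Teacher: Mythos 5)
The paper offers no proof of this proposition---it is quoted from \cite{Huansurvey}---and your sketch reproduces the standard argument given there: decompose sector-by-sector, identify $K_{\Lambda_G(g)}(X^g)$ via the weights of the central circle in $\Lambda_G(g)$ with the $\langle g\rangle$-eigenspace decomposition of $K_{C_G(g)}(X^g)$ shifted by fractional powers $q^{j/n}$, and match the result after base change to $\mathbb{Z}((q))$ with the rotation condition defining the $g$-sector of $K_{Tate}$. One small correction: in the extension $1\to C_G(g)\to\Lambda_G(g)\to\mathbb{T}\to 1$ the circle is the quotient, not a central subgroup; the central circle you actually need is the image of $\{e\}\times\mathbb{R}$ in $\Lambda_G(g)$ (an $n$-fold cover of that quotient $\mathbb{T}$), and Schur's lemma applied to it, together with the relation $[g,0]=[e,1]$, is what ties the $q$-weight $k\in j/n+\mathbb{Z}$ to the eigenvalue $e^{2\pi i j/n}$ of $g$.
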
 This is the main reason why the theory is called quasi-elliptic cohomology.
Proposition \ref{tateqellequiv} is given in \cite[(1.1)]{huan2018} and also \cite[Remark 6.19]{dove2019}.

In addition, we give an example computing quasi-elliptic cohomology, which is \cite[Example 3.3]{huan2018}. The conclusions in Example \ref{ex3.3:huan2018} are applied in Section \ref{compute:twisted:qell}.
\begin{example}[$G=\mathbb{Z}/N$] \label{ex3.3:huan2018}
Let $G=\mathbb{Z}/N$ for $N\geq 1$, and let $\sigma\in
G$. Given an integer $k\in\mathbb{Z}$ which projects to
$\sigma\in\mathbb{Z}/N$, let $x_k$ denote the
representation of $\Lambda_G(\sigma)$ defined by
\begin{equation}\begin{CD}\Lambda_{G}(\sigma)=(\mathbb{Z}\times\mathbb{R})/(\mathbb{Z}(N,0)+\mathbb{Z}(k,1))
@>{[a,t]\mapsto[(kt-a)/N]}>> \mathbb{R}/\mathbb{Z}=\mathbb{T}
@>{q}>> U(1).\end{CD}\label{xk}\end{equation} $R\Lambda_G(\sigma)$
is isomorphic to the ring $\mathbb{Z}[q^{\pm}, x_k]/(x^N_k-q^k)$.
\bigskip

For any finite abelian group 
$G=\mathbb{Z}/N_1\times\mathbb{Z}/N_2\times\cdots\times\mathbb{Z}/N_m$,
let $\sigma=(k_1, k_2, \cdots k_n)\in G.$ We have
$$\Lambda_G(\sigma)\cong\Lambda_{\mathbb{Z}/N_1}(k_1)\times_{\mathbb{T}}\cdots\times_{\mathbb{T}}\Lambda_{\mathbb{Z}/N_m}(k_m).$$ Then,
the representation ring of $\Lambda_G(\sigma)$
\begin{align*}R\Lambda_G(\sigma)&\cong
R\Lambda_{\mathbb{Z}/N_1}(k_1)\otimes_{\mathbb{Z}[q^{\pm}]}\cdots\otimes_{\mathbb{Z}[q^{\pm}]}R\Lambda_{\mathbb{Z}/N_m}(k_m)\\
&\cong\mathbb{Z}[q^{\pm}, x_{k_1}, x_{k_2},\cdots
x_{k_m}]/(x^{N_1}_{k_1}-q^{k_1},x^{N_2}_{k_2}-q^{k_2}, \cdots
x^{N_m}_{k_m}-q^{k_m})\end{align*} where all the $x_{k_j}$'s are
defined as $x_k$ in (\ref{xk}).\label{ppex}
\end{example}

\subsection{Loop space}\label{qellloop}

The author provides a loop space construction for quasi-elliptic cohomology in \cite[Section 2]{huan2018}. We review that model in this section.

The classical loop space is on the level $0$: we start with a smooth manifold $X$ and define a space  of free loops
\begin{equation}LX:=\mathbb{C}^{\infty}(S^1, X). \end{equation} It
comes with an evident action by the circle group
$\mathbb{T}$ defined by rotating the circle
\begin{equation}t\cdot \gamma:= (s\mapsto \gamma (s+t)), \mbox{
}t\in \T, \mbox{   } \gamma\in LX.\end{equation}

Let $G$ be a finite group acting on  $X$  from the right. The free loop space $LX$  is  equipped with an action
by the loop group $LG$   \begin{equation}\delta\cdot
\gamma:=(s\mapsto \delta(s)\cdot \gamma(s)),\mbox{ for any } s\in
 S^1, \mbox{   }\delta\in LX, \mbox{ }\mbox{
    }\gamma\in LG.\end{equation}

Combining the action by the group of automorphisms $Aut(S^1)$ on the
circle and the action by $LG$, we get an action by the extended
loop group $\Lambda G$ on $LX$, where \[\Lambda G:=LG\rtimes\mathbb{T}\]
is a subgroup of
\begin{equation} LG\rtimes Aut(S^1), \mbox{             }(\gamma, \phi)\cdot (\gamma', \phi'):= (s\mapsto \gamma(s) \gamma'(\phi^{-1}(s)), \phi\circ\phi')\end{equation}
with $\mathbb{T}$ identified with the group of rotations on $S^1$.
The group $\Lambda G$ acts on $LX$ by
\begin{equation}\delta \cdot(\gamma, \phi):= (t\mapsto
\delta(\phi(t))\cdot\gamma(\phi(t))), \mbox{  for any }(\gamma,
\phi)\in \Lambda G,\mbox { and    }\delta \in
LX.\label{loop2action}\end{equation}
Let $\widetilde{\delta}: G\times \mathbb{T}\longrightarrow X$ denote the map $(g, t)\mapsto \delta(t)g$.
The action on $\delta$ by $(\gamma, t)$ can be interpreted as precomposing $\widetilde{\delta}$ with a $G-$bundle map covering the rotation $\phi$.
\begin{equation}\xymatrix{
G\times \mathbb{T} \ar[rrr]^{(g, t)\mapsto (\gamma(t)g, \phi(t)) } \ar[d] &&&G\times \mathbb{T}\ar[d]\ar[r]^{\widetilde{\delta}} &X\\
\mathbb{T}\ar[rrr]^{\phi} &&&\mathbb{T}&}\label{lgtact}\end{equation}
More generally, we have the definition of the equivariant loop space $Loop(X/\!\!/G)$ below.

\begin{definition}We define the equivariant loop space $Loop(X/\!\!/G)$ as the category
with objects \[\xymatrix{
\mathbb{T} &P\ar[l]_>>>>>{\pi} \ar[r]^{f} &X}\]
where $\pi $ is a principal $G-$bundle over $\mathbb{T}$
and $f$ is a $G-$map. A morphism \[(\alpha, t): \{\xymatrix{
\mathbb{T} &P'\ar[l]_>>>>>{\pi} \ar[r]^{f'} &X}\}\longrightarrow \{\xymatrix{
\mathbb{T} &P\ar[l]_>>>>>{\pi} \ar[r]^{f} &X}\}\] consists of a $G-$bundle map $\alpha$ and a rotation $t$ making the diagrams commute. \[\xymatrix{
P'\ar@/^1pc/[rr]|-{f'} \ar[r]|-{\alpha} \ar[d] &P\ar[d]\ar[r]|-{f}&X\\
\mathbb{T}\ar[r]|-{t} &\mathbb{T}&}\] \label{catequivloop}
\end{definition}
In this way, starting from a groupoid $X\git G$, we get a loop groupoid $Loop(X/\!\!/G)$.

\begin{remark}The category $Loop(X/\!\!/G)$ has the same objects as the category of bibundles $Bibun(\mathbb{T}/\!\!/\ast, X/\!\!/G)$. The morphisms in $Bibun(\mathbb{T}/\!\!/\ast, X/\!\!/G)$ are those of the form $(\alpha, 0)$ in $Loop(X/\!\!/G)$.  \end{remark}

\begin{proposition}The groupoid $\Lambda(X/\!\!/G)$ is a subgroupoid of $Loop(X/\!\!/G)$ consisting of constant loops. \end{proposition}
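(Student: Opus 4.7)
The plan is to construct an explicit functor $F\colon \Lambda(X/\!\!/G) \to Loop(X/\!\!/G)$, verify it is fully faithful, and identify its essential image as the subgroupoid of constant loops.

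On objects, to each $g \in G^{tors}$ I associate the mapping torus
\[
P_g := (G \times \mathbb{R})/\mathbb{Z}, \qquad n\cdot(h,s) = (g^n h,\, s+n),
\]
viewed as a principal $G$-bundle over $\mathbb{T}$ via $[h,s]\mapsto[s]$ with right $G$-action $[h,s]\cdot g_0 = [hg_0, s]$. For $x \in X^g$ define the $G$-equivariant map $f_x\colon P_g \to X$ by $f_x([h,s]) := h^{-1} \cdot x$; the hypothesis $g \cdot x = x$ is exactly what is needed for well-definedness. Send the object $x \in X^g$ of $\Lambda(X/\!\!/G)$ to the loop $(P_g, f_x)$.

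On morphisms, send a class $([\sigma, t], x) \in \Lambda_G(g, g') \times X^g$ to the pair $(\alpha_{\sigma, t}, t)$, where $\alpha_{\sigma, t}\colon P_g \to P_{g'}$ is the $G$-bundle map $[h,s] \mapsto [\sigma h,\, s+t]$ covering rotation by $t$. Well-definedness of $\alpha_{\sigma,t}$ as a map into $P_{g'}$ forces $\sigma g = g'\sigma$, i.e.\ $\sigma$ lies in $C_G(g, g')$ (after reconciling conventions), and the same calculation shows that replacing $(\sigma, t)$ by $(g'\sigma,\, t+1) = (\sigma g,\, t+1)$ gives the same bundle map -- precisely the equivalence $(x,t)\sim(gx, t-1) = (xg', t-1)$ defining $\Lambda_G(g, g')$. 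The compatibility $f_{\sigma x}\circ \alpha_{\sigma,t} = f_x$ follows from $(\sigma h)^{-1}(\sigma x) = h^{-1}x$, and functoriality reduces to $\alpha_{\sigma_1,t_1}\circ\alpha_{\sigma_2,t_2} = \alpha_{\sigma_1\sigma_2,\, t_1+t_2}$, which agrees with the composition rule $[\sigma_1,t_1][\sigma_2,t_2]=[\sigma_1\sigma_2,\, t_1+t_2]$ in $\Lambda(X/\!\!/G)$.

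For full faithfulness, observe that every $G$-bundle map $P_g \to P_{g'}$ covering a rotation of $\mathbb{T}$ is, up to the above $\mathbb{Z}$-ambiguity, of the form $\alpha_{\sigma,t}$, and imposing $f_{x'}\circ\alpha_{\sigma,t} = f_x$ then forces $x' = \sigma \cdot x$, matching exactly the morphisms of $\Lambda(X/\!\!/G)$. To identify the image, a loop $(P, f)$ is \emph{constant} precisely when the map $\widetilde{f}_0\colon \mathbb{R} \to X$ obtained from a trivialization of the pullback to the universal cover of $\mathbb{T}$ is constant; if $\widetilde{f}_0\equiv x$, the monodromy relation $\widetilde{f}_0(s+1) = g\cdot\widetilde{f}_0(s)$ forces $x \in X^g$, and $(P, f)$ is then isomorphic to some $(P_g, f_x)$ with $g$ torsion. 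The main technical nuisance to watch is matching the left- and right-action conventions so that the $\mathbb{Z}$-action on the universal cover produces exactly the quotient defining $\Lambda_G(g, g')$; once these are pinned down, the remaining verifications are routine diagram chases.
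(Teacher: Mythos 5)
Your argument is correct and is the expected one: the mapping torus $P_g = (G\times\mathbb{R})/\mathbb{Z}$ with $n$ acting by $(h,s)\mapsto(g^n h, s+n)$, together with the fibrewise-constant $G$-map $f_x$, is precisely the standard realization of objects of $\Lambda(X/\!\!/G)$ inside $Loop(X/\!\!/G)$, and the universal-cover criterion for a constant loop is the right one. The paper states this proposition as a known fact without proof (pointing to \cite{Huansurvey}), and your construction reproduces that model.

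Two points to tighten. First, as you acknowledge yourself, the action conventions in the paper's Sections 2.1 and 2.2 are not consistent: the relation $(x,t)\sim(gx,t-1)=(xg',t-1)$ defining $\Lambda_G(g,g')$ entails $x^{-1}gx=g'$, whereas well-definedness of your $\alpha_{\sigma,t}\colon P_g\to P_{g'}$ forces $g'\sigma=\sigma g$, i.e.\ $\sigma^{-1}g'\sigma=g$, which is conjugation in the opposite direction. This is resolvable by flipping the direction of morphisms or by sending $\sigma\mapsto\sigma^{-1}$, but since a sign error here breaks functoriality it is worth making the translation explicit rather than waving at it. Second, and more substantively, your fullness claim that every $G$-bundle map $P_g\to P_{g'}$ covering a rotation has the form $\alpha_{\sigma,t}$ is only true for $G$ finite (or at least discrete): lifting to the universal cover, such a map is $(h,s)\mapsto(\gamma(s)h,\, s+t)$ for a continuous $\gamma\colon\mathbb{R}\to G$, and it is the discreteness of $G$ combined with the connectedness of $\mathbb{R}$ that forces $\gamma$ to be constant. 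For a positive-dimensional compact Lie group $G$, the full subgroupoid of $Loop(X/\!\!/G)$ on constant loops is strictly larger than $\Lambda(X/\!\!/G)$, because bundle maps can twist the fibres by a nonconstant path. The proposition should therefore be read with the $G$-finite hypothesis that governs the rest of the paper; stating it would make your full-faithfulness step airtight.
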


\section{Devoto's equivariant elliptic cohomology over $\mathbb{C}$}\label{devotoequiell}

For the rest part of the paper, the equivariance group $G$ is always a finite group, unless otherwise specified.

In \cite{devoto1996} Devoto provided a $G$-equivariant refinement of the elliptic cohomology defined by Landweber, Ravenel and Stong in \cite{LRS2004} for finite groups $G$. In this section, we give a brief overview of his construction. Another reference for this section is \cite[Section 3]{Berwick2022}. We construct in Section \ref{QEllChernS}  the Chern character from quasi-elliptic cohomology to a variant of Devoto's equivariant elliptic cohomology.

Let $\mathcal{C}(G)$ denote the set of pairs of commuting elements of $G$, and $L \subset \mathbb{C}^2$ the subspace of pairs $(t_1,t_2)$ such that the imaginary part of $t_1/t_2$ is defined and positive. The group $SL_2(\Z)$ acts on $L \times \mathcal{C}(G)$ from the right by
\begin{equation}\label{sl2act}
((t_1,t_2),(g,h)) \cdot \left( \begin{array}{cc} a & b \\ c & d \end{array} \right)= ((at_1 + ct_2,bt_1+ dt_2), (g^dh^{-b},g^{-c}h^a)).
\end{equation}
And the group $G$ acts on $L \times \mathcal{C}(G)$ from the right by
\begin{equation}\label{cgact}
((t_1,t_2),(g,h))\cdot k := ((t_1,t_2),(k^{-1}gk,k^{-1}hk)), \quad \forall k\in G.
\end{equation}
Since the two group actions commute, we have a right action of the group \[G \times SL_2(\Z)\] on $L \times \mathcal{C}(G)$.

Let $\O(L)$ denote the ring of holomorphic functions on $L$. Let \[\O^j(L): =\{f\in \O(L) \mid f(\mu^2 t_1, \mu^2 t_2) = \mu^j f(t_1,t_2), \forall (t_1, t_2)\in L, \forall \mu\in \C^\ast\}, \]
where $\mu\in \mathbb{C}^\ast$ acts on the lattices by dilation and rotation. The $\C^\ast$-action on $L$ given by scaling both $t_1$ and $t_2$ induces a graded ring structure on $\O(L)$, i.e.
\[
\O(L) = \bigoplus_{j \in \Z} \: \O^j(L),
\]

\begin{definition}
The $SL_2(\Z)$-invariant elements of $\O^j(L)$ are called the \textit{weakly modular forms of weight $-j/2$}. We denote this subring of $\O^j(L)$ by $MF^j_{weak}$. \end{definition}

Let $G$ act on a space $X$ from the right, and denote by $X^{g,h} \subset X$ the subspace of points fixed by both $g$ and $h$. The action of $G$ on $X$ induces homemorphisms
\begin{equation}\label{act}
X^{g,h} \rightarrow X^{k^{-1}gk,k^{-1}hk}
\end{equation}
sending $x \mapsto x \cdot k$ for all $k \in G$.

As indicated in (\ref{cgact}), there is a $G$-action on $\mathcal{C}(G)$. We will use $\mathcal{C}[G]$ to denote the orbit space of the $G$-action  on $\mathcal{C}(G)$, and the symbol $[g,h]$ to denote the orbit of $(g,h)$. The stabilizer of  $(g,h)$ is the maximal subgroup \[C^{(2)}_G(g,h) \subset G\] that centralizes both $g$ and $h$. The $G$-action  induces a $C^{(2)}_G(g,h)$-action on $X^{g,h}$. 

\begin{proposition} For any $k \in G$, there is an isomorphism
\[
H^*_{C^{(2)}_G(k^{-1}gk,k^{-1}hk)}(X^{k^{-1}gk,k^{-1}hk}) \longrightarrow  H^*_{C^{(2)}_G(g,h)}(X^{g, h})
\]
induced by \eqref{act}. \end{proposition}

Then we are ready to give the definition of Devoto's equivariant elliptic cohomology.
\begin{definition}\label{dir}
For any integer $k$, the $k$-th Devoto's $G$-equivariant elliptic cohomology of a compact space $X$ is defined as the abelian group
\begin{equation}\label{devotoell}
\begin{array}{rcl}
 Ell^k_G(X) &:=&  \bigoplus\limits_{i+j = k} \left( \bigoplus\limits_{(g,h) \in \mathcal{C}(G)} H^i(X^{g, h})\otimes_\C \O^j(L) \right)^{G \times SL_2(\Z)} \\
&\cong& \bigoplus\limits_{i+j = k} \bigoplus\limits_{[g,h] \in \mathcal{C}[G]}  \left(H^i(X^{g, h})\otimes_\C \O^j(L) \right)^{C^{(2)}_G(g,h) \times SL_2(\Z)}
\end{array}
\end{equation}
where the isomorphism follows by choosing a representative pair $(g,h)$ for each conjugacy class $[g, h]$ in $\mathcal{C}(G)$.
\end{definition}

\begin{remark}
Note that $\{g,h\}$ and $\{g^dh^{-b},g^{-c}h^a\}$ generate the same subgroup of $G$.  And we have
\[
X^{g,h} = X^{g^dh^{-b},g^{-c}h^a} \quad \text{and} \quad C^{(2)}_G(g,h) = C^{(2)}_G(g^dh^{-b},g^{-c}h^a).
\] Thus, $SL_2(\Z)$ acts trivially on part of the cohomology $H^i(-)$ in \eqref{devotoell}.
\end{remark} 

\begin{remark}
If $G = \{e\}$ is the trivial group, then
\begin{equation}
Ell^k_{e}(X) =  \bigoplus_{i+j = k} H^i(X)\otimes_\C \O^j(L)^{SL_2(\Z)} = H^i(X) \otimes_\C MF^j_{weak}
\end{equation}
where the right hand side is the graded tensor product over $\C$ of the cohomology ring of $X$ with the graded ring of weakly modular forms of weight $-j/2$.
\end{remark}

\begin{remark}\label{pt}
If $X = \pt$, then
\[
Ell^k_G(\pt) =  \left( \bigoplus_{(g,h) \in \mathcal{C}(G)} \O^k(L) \right)^{G \times SL_2(\Z)} \cong \left( \bigoplus_{[g,h] \in \mathcal{C}[G]} \O^k(L) \right)^{SL_2(\Z)},
\]
which is a direct application of the isomorphism in Definition \ref{dir}. On the right hand side we obtain the direct sum of the ring of weakly modular forms of weight $-k/2$ with itself, indexed over all conjugacy classes of commuting pairs in $G$.
\end{remark}

\begin{remark}In \cite{ganter2007} Ganter discussed the equivariant elliptic cohomology $Ell^k$ by Devoto and provided a loop space model of equivariant Tate
K-theory motivated by Devoto's orbifold loop space. In \cite{huan2018}, the author constructed a loop space model motivated by Devoto's orbifold loop space with the circle rotation added. \end{remark}

\section{Twisted equivariant elliptic cohomology over $\mathbb{C}$}\label{twistedellold}

In \cite[Section 3]{Berwick2022}, Berwick-Evans constructed a twisted version of Devoto's equivariant elliptic cohomology with complex coefficients. We sketch his construction in this section. We construct in Section \ref{twistedChernS}  the Chern character from twisted quasi-elliptic cohomology to a variant of twisted Devoto's equivariant elliptic cohomology.

For convenience, the construction is expressed in term of normalised cocycles.
\begin{definition}
A 3-cocycle on $G$ with values in $U(1)$ is a map
\[
\alpha: G \times G \times G \rightarrow U(1)
\]
satisfying
\begin{equation}
\frac{\alpha(g_1,g_2,g_3)\alpha(g_0,g_1g_2,g_3)\alpha(g_0,g_1,g_2)}{\alpha(g_0g_1,g_2,g_3)\alpha(g_0,g_1,g_2g_3)} = 1
\end{equation}
for any $g_0,g_1,g_2,g_3 \in G$. Such a cocycle is called \textit{normalised} if it evaluates to $1$ on any triple containing the identity element $e \in G$. \end{definition}

Recall the value of $Ell^k_G$ of the single point space in Remark \ref{pt}. We can use $\alpha$ to twist the $G$-action on
\begin{equation}\label{pta}
\bigoplus_{(g,h) \in \mathcal{C}(G)} \O^k(L)
\end{equation}
by defining it to be
\begin{equation}\label{gro}
h\cdot_\alpha  f_{g_1,g_2} =  \frac{\alpha(g_2,h,g_1)\alpha(h,g_1,g_2)\alpha(g_1,g_2,h)}{\alpha(h,g_2,g_1)\alpha(g_1,h,g_2)\alpha(g_2,g_1,h)} f_{g_1,g_2}, \quad \forall h\in G,
\end{equation} where $f_{g_1, g_2}$ is the value of $f\in \O(L)$ at $(g_1, g_2)\in \mathcal{C}(G)$. 

\begin{remark}
In \cite[Section 1.4.3]{willerton2008}, Willerton gives the formulas of the transgression
\begin{equation}\label{Willertontransgression}
\tau_x\theta (g) := \frac{\theta(g,x)}{\theta(x,g)}, \qquad \tau_x \alpha(g,h) := \frac{\alpha(g,h,x) \alpha(x,g,h)}{\alpha(g,x,h)}, \quad \forall x \in G, \quad \forall g,h \in C_G(x)
\end{equation}
which sends a 2-cocycle $\theta \in Z^2(G)$ to a 1-cocycle $\tau_x \theta \in Z^1(C_G(x))$ and a 3-cocycle $\alpha \in Z^3(G)$ to a 2-cocycle $\tau_x \alpha \in Z^2(C_G(x))$. The relation between the $G$-action \eqref{gro} and the transgression \eqref{Willertontransgression} can be interpreted in this way:
\[
h\cdot_\alpha  f_{g_1,g_2} = \tau_{g_2}\alpha(h,g_1)  \tau_{g_2}\alpha(g_1,h)^{-1} f_{g_1,g_2} = \tau_{g_1} (\tau_{g_2} \alpha)(h) f_{g_1,g_2}.
\] 
This statement is not completely proven in \cite{willerton2008} and is completely proven in \cite[Section 3]{SatiSchreiber2022}.
\end{remark}

This is compatible with the $SL_2(\Z)$-action on $\mathcal{C}(G)$. 

We denote by the symbol 
\[
\bigoplus_{(g,h) \in \mathcal{C}(G)} \O^{k+\alpha}(L)
\] the group of holomorphic functions in \eqref{pta} equipped with the $G$-action twisted by $\alpha$. 
The $G$-invariants of this group are the collections of functions $(f_{g,h})_{(g,h) \in \mathcal{C}(G)}$ satisfying the following transformation property.
\[
h \cdot_\alpha f_{g_1,g_2} = f_{(h^{-1}g_1h,h^{-1}g_2h)} .
\]
\begin{definition}[\cite{Berwick2022}] 
For a manifold $X$ with an action of a finite group $G$, the $\alpha$-twisted version of Devoto's equivariant elliptic cohomology for a $G$-space $X$ is given by
\[
Ell_G^{k+\alpha}(X) := \bigoplus_{i+j = k} \left( \bigoplus_{ [g,h] \in \mathcal{C}[G]} H^i(X^{g,h}) \otimes \O^{j+\alpha}(L)  \right)^{C^{(2)}_G(g,h) \times SL_2(\Z)}.
\]
\end{definition}

\begin{remark}
In \cite[Section 4]{Berwick2022} Berwick-Evans constructs the induction  formula as well as other character formulas in the higher Hopkins-Kuhn-Ravenel character theory for the theory $Ell_G^{\ast+\alpha}(-)$. Moreover, he discussed its relation with physics.

 \end{remark}

\section{Twisted quasi-elliptic cohomology}\label{twistedqell}

\subsection{Definition}
In this section we define twisted quasi-elliptic cohomology $QEll^\alpha(X/\!\!/G)$ with $\alpha \in H^3(BG; U(1))$, which is constructed as the orbifold K-theory of a twisted orbifold $\Lambda^\alpha(X/\!\!/G)$ and is based on the construction in \cite[Section 3]{FreedQuinn1993}.

Let $G$ be a finite group and $X$ a $G-$space.
First, we show that each $\alpha \in H^3(BG;U(1))$ determines an element $\theta_g$ in $H^2(BC_G(g); U(1))$ for each conjugacy class $[g]$ in $G$. Let $e$ be the evaluation map
\[
e: B\Z \times \Map(B\Z,BG) \longrightarrow BG
\]
and let $\pi$ be the projection
\[
\pi: B\Z \times \Map(B\Z,BG) \twoheadrightarrow \Map(B\Z,BG).
\]
Define the class
\begin{equation}
\theta:= \pi_* e^* \alpha \in H^2(\Map(B\Z,BG);U(1)) \cong \bigoplus_{[g]} H^2(BC_G(g); U(1)), \label{thetadef}
\end{equation} 
where $[g]$ goes over all the conjugacy classes in $G$. The class $\theta$ is well-defined with degree two because the formalism of the Pontryagin-Thom construction means that the degree of $e^*\alpha$ drops by one when we push it forward along $\pi_*$. Note that we have also used the fact that the mapping space $\Map(B\Z,BG)$ is homotopy equivalent to
\[
\coprod_{[g]} BC_G(g).
\]
In this way, $\theta$ determines an element \[\theta_{g} \in H^2(BC_G(g);U(1))\] for each $[g]$.

We can now define the twisted orbifold. Each 2-cocycle $\theta_{g}$ determines a central extension
\[
1 \rightarrow \mathbb{T} \rightarrow C^\alpha_G(g)  \rightarrow C_G(g) \rightarrow 1
\]
with group multiplication given by
\[
(a,h)(b,k) = (a+b+\theta_g(h,k),hk),
\] for any $(a, h)$, $(b, k)$ in $C^\alpha_G(g)$.
We have a well-defined $C^\alpha_G(g)-$action  on $X^g$ \begin{equation}(a, h)\cdot x:= h\cdot x.\label{cextact}\end{equation}
For ease of notation, we have written $C^\alpha_G(g)$ in place of what is actually $C^{\theta_g}_G(g)$. Similar notational simplifications are also made in the rest part of the paper when there is no confusion. 

\begin{lemma}
Suppose that $\theta_g$ has order $n$, and let $l$ be the order of $g$. Then the order of $(0,g)$ 
divides $nl$.
\end{lemma}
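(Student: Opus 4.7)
The plan is to compute $(0,g)^{nl}$ directly from the multiplication rule $(a,h)(b,k)=(a+b+\theta_g(h,k),hk)$ and check that it equals the identity $(0,e)$ of $C^{\alpha}_G(g)$. A short induction on $k$ establishes
\[
(0,g)^{k} \;=\; \bigl(a_k,\, g^{k}\bigr), \qquad a_k := \sum_{i=1}^{k-1}\theta_g(g,g^{i}), \quad a_1 := 0,
\]
where the inductive step uses only the twisted multiplication.

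Setting $k=nl$, the hypothesis $g^l=e$ gives $g^{nl}=e$, so only the first coordinate needs attention. To simplify $a_{nl}$, I partition the index set $\{1,\dots,nl-1\}$ by residue modulo $l$: the $n-1$ indices with $i\equiv 0 \pmod{l}$ contribute $\theta_g(g,e)=0$ by the normalisation of $\theta_g$, and each nonzero residue $r\in\{1,\dots,l-1\}$ appears exactly $n$ times in this range. Hence $a_{nl}=n\cdot a_l$, where $a_l=\sum_{r=1}^{l-1}\theta_g(g,g^{r})$, so $(0,g)^{nl}=(n\cdot a_l,e)$ and the lemma reduces to the cochain-level identity $n\cdot a_l = 0$ in $\mathbb{R}/\mathbb{Z}$.

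For this last identity I invoke the hypothesis that $[\theta_g]$ has order $n$ in $H^{2}(BC_G(g);\mathbb{R}/\mathbb{Z})$. The long exact sequence associated to the coefficient sequence $0\to\tfrac{1}{n}\mathbb{Z}/\mathbb{Z}\to\mathbb{R}/\mathbb{Z}\xrightarrow{n}\mathbb{R}/\mathbb{Z}\to 0$ shows that any order-$n$ class lifts along the inclusion $\tfrac{1}{n}\mathbb{Z}/\mathbb{Z}\hookrightarrow\mathbb{R}/\mathbb{Z}$, so, after modifying by a coboundary, I may choose a representing cocycle $\theta_g$ whose pointwise values lie in $\tfrac{1}{n}\mathbb{Z}/\mathbb{Z}$. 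With this representative $n\theta_g\equiv 0$ as a cochain, and therefore $n\cdot a_l=0$, yielding that the order of $(0,g)$ divides $nl$.

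The main subtlety is that the element $(0,g)\in C^{\alpha}_G(g)$, and hence its order, depends on the specific cocycle representing $[\theta_g]$ rather than only on its cohomology class: changing cocycles by a coboundary $\delta\xi$ is realised by the isomorphism of extensions $(a,h)\mapsto(a+\xi(h),h)$, which shifts the distinguished lift of $g$ by $(\xi(g),0)$. The argument therefore relies implicitly on choosing the cocycle representative whose values lie in $\tfrac{1}{n}\mathbb{Z}/\mathbb{Z}$; this is the natural choice inherited from the transgression $\theta = \pi_{*}e^{*}\alpha$ whenever $\alpha$ itself is represented by an $n$-torsion valued cocycle on $BG$.
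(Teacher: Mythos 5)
Your proposal is correct and follows essentially the same route as the paper: a direct computation of $(0,g)^{nl}$ from the twisted multiplication, regrouping the sum $\sum_{i=1}^{nl-1}\theta_g(g,g^i)$ into $n$ copies of $\sum_{r=1}^{l-1}\theta_g(g,g^r)$ plus terms $\theta_g(g,e)=0$ killed by normalisation. The one difference is that the paper simply asserts the final vanishing ``since $\theta_g$ has order $n$,'' implicitly treating $\theta_g$ as an $n$-torsion cochain, whereas you justify this by lifting the class to $\tfrac{1}{n}\Z/\Z$-valued cocycles and correctly flag that the order of $(0,g)$ depends on the chosen representative --- a genuine refinement of the paper's argument rather than a different proof.
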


\begin{proof}
Note that $g^l=e$. We have
\begin{align*}
nl(0,g) &= (\theta_g(g,g) + \theta_g(g,g^2) + ... + \theta_g(g,g^{nl-1}),g^{nl}) \\
&= ((n-1)(\theta_g(g,g) + \theta_g(g, g^2)+ ... + \theta_g(g,g^l))+ \theta_g(g,g) + ... + \theta_g(g,g^{l-1}), e) \\
&= (n(\theta_g(g,g) + \theta_g(g, g^2)+ ... + \theta_g(g,g^{l-1})) + (n-1)\theta_g(g,g^l),e) \\
&= (0,e).
\end{align*} 
The second equality holds because $g$ has order $l$, which means that $g^{ml + k} = g^k$ for all integers $m$ and $k$. The third equality holds since $(nl-1) = (n-1)l + l -1$, and we get the fourth equality just by rearranging terms. The final equality holds since $\theta_g$ has order $n$, and $\theta_g(g,e) = 0$ since $\alpha$ is normalised. Therefore, $nl(0,g)$ is equal to the identity element, and so the order of $(0,g)$ must divide $nl$.
\end{proof}
This result is cited in \cite[Remark 6.22]{dove2019}.


\begin{example}[Twisted Inertia Groupoid $I^{\alpha}(X/\!\!/G)$] T. Dove constructed the twisted Inertia groupoid in \cite{dove2019}.
The
twisted inertia groupoid $I^{tors}(X/\!\!/G)$ of the translation
groupoid $X/\!\!/G$ is the groupoid with

\textbf{objects}: the space $\coprod\limits_{g\in G}X^{g}$

\textbf{morphisms}: the space $\coprod\limits_{g\in
G}C_G^{\alpha}(g)\times X^g$.

For each $x\in X^g$, $C_G^{\alpha}(g)$  is the automorphism group of it. 

The twisted inertia groupoid is used in \cite[Section 6.4.1]{dove2019} to define the twisted equivariant Tate K-theory.\end{example}

\begin{example}[Twisted orbifold loop space]\label{twistedorbloop} 
In  \cite[Definition 2.3]{ganter2007} Ganter defined orbifold loop space \[\mathcal{L}(X/\!\!/G):=\coprod_{[g]}\mathcal{L}_gX/\!\!/C_G(g),\] via which equivariant Tate K-theory can be constructed. In this example we provide the twisted version of it. 

The space $\mathcal{L}_gX$ is the space $\mbox{Map}_{\mathbb{Z}/l}(\mathbb{R}/l\mathbb{Z},X) $ 
where $l$ is the order of $g$.
There is a well-defined
 $C_G^{\alpha}(g)-$action on $\mathcal{L}_gX$ by \[\gamma((a, h))(t)=\gamma(t+a)h\] for $\gamma \in \mathcal{L}_gX$ and $(a, h)\in C_G^{\alpha}(g)$. It's straightforward to
 check that $\gamma((a, h))$
 is indeed in $\mathcal{L}_gX$. The twisted orbifold loop space is defined as \[\mathcal{L}^{\alpha}(X/\!\!/G):=\coprod_{[g]}\mathcal{L}_gX/\!\!/C_G^{\alpha}(g).\]
 Note that on the space of constant loops $X^g$, the action by $C_G^{\alpha}(g)$ in (\ref{cextact}) covers that by $C_G(g)$.

 The twisted Inertia groupoid  $I^{\alpha}(X/\!\!/G)$ is the full subgroupoid of $\mathcal{L}^{\alpha}(X/\!\!/G)$ consisting of constant loops.
\end{example}

Let $\Lambda^{\alpha}_G(g)$ denote the quotient
\[
\R \times C^\alpha_G(g) / \langle(-1,(0,g))\rangle,
\] 
It
 fits into the short exact sequence \[1\longrightarrow C_G^{\alpha}(g) \longrightarrow
\Lambda^\alpha_G(g) \longrightarrow \mathbb{T}\longrightarrow 1.\]

We have the well-defined twisted orbifold
\begin{equation}
\Lambda^\alpha(X/\!\!/G) := \coprod_{g\in G^{tors}_{conj}} X^g/\!\!/\Lambda^\alpha_G(g).\label{lambdaalpha}
\end{equation}

In addition, we have the short exact sequence \begin{equation}1\longrightarrow \mathbb{T}\longrightarrow \Lambda^\alpha_G(g)\longrightarrow \Lambda_G(g)\longrightarrow 1. \label{rel:twist:lambda}\end{equation}
The surjective map in \eqref{rel:twist:lambda} gives the map between orbifolds
\begin{equation}
    \Lambda^\alpha(X/\!\!/G) \rightarrow \Lambda(X/\!\!/G) \label{lambdace}
\end{equation}
which sends a morphism $(x,[r,(a,h)])$ to $(x,[r,h])$. 

We would like to mention the map \eqref{lambdace} gives a $\mathbb{T}$-equivariant graded central extension in the sense of \cite{luecke2022}.

\begin{lemma}
The central extension $\Lambda^{\alpha}_{G}(g)$ is determined by the $2$-cocycle $\hat{\theta}_g \in Z^2(\Lambda_{G}(g))$ given by
\[
\hat{\theta}_g([\varsigma_2,t_2],[\varsigma_1,t_1]) = \theta_g([\varsigma_2],[\varsigma_1]).
\]
\end{lemma}

The following definition is \cite[Definition 7.1]{adem2003}.

\begin{definition} Let $\theta$ be an element in $H^2_G(X;\Z)$ and $G^{\theta}$ the group extension which represents it, 
$1\rightarrow \mathbb{T} \rightarrow G^{\theta} \rightarrow G\rightarrow 1$. Then $X$ is equipped with a $G^\theta$-action via the projection map $G^\theta \twoheadrightarrow G$. A \textit{$\theta$-twisted $G$-equivariant vector bundle over $X$} is defined to be a $G^{\theta}$-equivariant vector bundle $V$ over $X$ such that the central circle in $G^\theta$ acts by complex multiplication on the fibers of $V$. \label{twistedeqvectbd}
\end{definition}

Two $\theta$-twisted $G$-equivariant vector bundles over $X$ are isomorphic if and only if they are isomorphic as $G^\theta$-equivariant vector bundles. With this in mind, we state the following definition, which is \cite[Definition 7.2]{adem2003}.

\begin{definition} The $\theta$-twisted $G$-equivariant K-theory of a $G$-space $X$, denoted by $K^\theta_G(X)$, is defined to be the Grothendieck group of isomorphism classes of $\theta$-twisted $G$-equivariant vector bundles over $X$. \label{twistedeqK}
\end{definition}

Now we are ready to give the definition of twisted quasi-elliptic cohomology.

\begin{definition} \label{Def:twistedQEll} The twisted quasi-elliptic cohomology $QEll^{\alpha +\ast}_G(-)$ twisted by $\alpha\in H^3(BG; U(1))$
is defined by
\[QEll^{\alpha+\ast}_G(X):= \prod_{g\in G_{conj}}K^{\theta_g+\ast}_{\Lambda_G(g)}(X^g),\]
where each $\theta_g$ is the factor in \eqref{thetadef} corresponding to $g$. 
\end{definition}

\begin{remark} 
Note that $\theta$-twisted $G$-equivariant vector bundle is a special case of the twisted vector bundle in Definition 2.5 in \cite{gomi2017}
 with trivial $\mathbb{Z}/2$-grading. And the twisted equivariant K-theory in Definition \ref{twistedeqK} is a special case of Freed-Moore K-theory. Therefore, the Real twisted quasi-elliptic cohomology $QEllR^{\alpha+\ast}_G(X)$, which is constructed in \cite{huanyoung2022} as Freed-Moore K-theory of a Real version of the orbifold $\Lambda(X/\!\!/G)$, is a Real generalization of twisted quasi-elliptic cohomology. Twisted quasi-elliptic cohomology is equivalent to twisted Real quasi-elliptic cohomology of an orbifold with trivial $\Z/2$-grading.

 \label{Rmk:twisted_real}
\end{remark}

\begin{proposition}
We have the relation between $QEll^{\alpha+\ast}_G(-)$ and the twisted equivariant Tate K-theory $K^{\alpha + \ast}_{Tate}(-\git G)$  in \cite[Definition 6.21]{dove2019} as below.
\[QEll^{ \alpha + \ast }_G(X)\otimes_{\mathbb{Z}[q^{\pm}]}\mathbb{Z}((q))\cong K^{ \alpha + \ast}_{Tate}(X/\!\!/G)\]
\end{proposition}
\begin{proof}
This follows from the definition of both theories and Proposition \ref{tateqellequiv}. \end{proof}

\subsection{Examples and Properties}
In this section we provide some simple examples of twisted quasi-elliptic cohomology and some properties of it.

\begin{example}When $G$ is the trivial group and $g$ is the identity element, $QEll^{\ast}_G(X)=K^{\ast}_{\mathbb{T}}(X)$.
In this case, for any 3-cocycle $\alpha$, 
$QEll^{\alpha+\ast}_G(X)=K^{\theta_e + \ast}_{\mathbb{T}}(X)$.
\end{example}

\begin{example}Let $X$ be a CW complex with trivial $G$-action. In this case, for any 3-cocycle $\alpha$, by \cite[Lemma 7.3]{adem2003}, \begin{align*}QEll^{\alpha+\ast}(X)&=
\prod_{g\in G_{conj}}K^{\theta_g+\ast}_{\Lambda_G(g)}(X)\\
&\cong \prod_{g\in G_{conj}}K^{\ast}(X)\otimes R_{\theta_g}\Lambda_G(g)= K^{\ast}(X)\otimes \big(\prod_{g\in G_{conj}} R_{\theta_g}\Lambda_G(g)\big).\end{align*}
where $G_{conj}$ is the set of a family of representatives of the $G$-conjugacy classes in the finite group $G$ and $R_{\theta_g}\Lambda_G(g)$ denotes the ring of the $\theta_g-$representations of $\Lambda_G(g)$.

\end{example}

\begin{example}[Restriction map]Let $X$ be a $G-$space and $Y$ an $H-$space. Let $f: G\longrightarrow H$ denote a group homomorphism and let $h: X\longrightarrow Y$ denote a continuous map which is $G$-equivariant in the sense that \[h(g\cdot x)= f(g)\cdot h(x).\] 
From $f$ we can define the group homomorphisms \[f_g: \Lambda_G(g)\longrightarrow \Lambda_H(f(g)), \quad [a, t]\mapsto [f(a), t]\] for each $g\in G$.

Let $\alpha\in H^3(BH; U(1))$. Note that we have the commutative diagrams \begin{equation}
\xymatrix{ H^3(BH; U(1))\ar[r]^{f^*}\ar[d]^{\tau} &H^3(BG; U(1) )\ar[d]^{\tau}\\  H^2(\Map(B\Z,BH); U(1))\ar[r]^{f^*} &H^2(\Map(B\Z,BG); U(1)) \\ H^2(BC_H(f(g)); U(1))\ar[r]^{f^*}\ar@{^{(}->}[u] &H^2(BC_G(g); U(1)) \ar@{^{(}->}[u] }
  \end{equation}
Thus, \begin{equation}f^*(\tau(\alpha))=\tau(f^*(\alpha)); \quad f^*(\theta_{f(g)})=(f^*\theta)_g.  \end{equation} where $\tau(\alpha)=\prod\limits_{[h]}\theta_h$ with each $\theta_h\in H^2(BC_H(h); U(1)) $ and $\tau(f^*\alpha)=\prod\limits_{[g]}(f^*\theta)_g$ with each $(f^*\theta)_g\in H^2(BC_G(g); U(1) ) $.

Thus, we obtain a map for each $g\in G$ \[h_g^{\theta_g*}: K_{\Lambda_H(f(g))}^{\theta_{f(g)} + \ast }(Y^{f(g)})
\longrightarrow K_{\Lambda_G(g)}^{f^*(\theta_{f(g)}) + \ast }(X^g)\] and the restriction map
\begin{equation}h^*=\prod_{[h]} h_g^{\theta_g*}: QEll^{\alpha+\ast }_H(Y)
\longrightarrow QEll^{f^*{\alpha}+\ast }_G(X).\end{equation}

 \end{example}

In \cite[Section 3.4]{huanyoung2022}, the basic properties of twisted Real quasi-elliptic cohomology are discussed and proved in detail. By Remark \ref{Rmk:twisted_real}, we obtain the corresponding basic properties of twisted quasi-elliptic cohomology from them straightforwardly. We sketch the properties below.

\begin{proposition}
Let $G$ be a finite group and $H$ a subgroup of $G$. Let $X$ be a $G$-space and $\alpha\in H^3(BG; U(1))$. Then the change-of-group map 
\begin{equation}
\rho^{G}_{H}:
QEll^{\alpha+\ast}_{G}(X\times_{H} G)\longrightarrow QEll^{i^*\alpha+ \ast }_{H}(X\times_{H} G)\longrightarrow QEll^{ i^*\alpha + \ast}_{H}(X)
\end{equation}
is an isomorphism, where the first map is induced by the inclusion $i: H\hookrightarrow G$ of groups and the second map is induced by the inclusion \[ X\hookrightarrow X\times_{H} G, \quad x\mapsto [x, e].\]

\end{proposition}

\begin{example}
The induction for $QEll^{\alpha + \ast}$ can be defined as the composition
\begin{equation}
\mathcal{I}^{G}_{H}: QEll^{i^* \alpha + \ast }_{H}(X) \longrightarrow QEll^{\alpha + \ast}_{G}(X\times_{H} G)
\longrightarrow QEll^{\alpha + \ast}_{G}(X),
\end{equation}
where the second map is the induction map for $K$-theory induced by the finite covering $\Lambda((X\times_{H} G) /\!\!/ G) \rightarrow \Lambda( X /\!\!/ G )$ given on objects by $( [x, g], \sigma) \mapsto ( xg, \sigma)$ and morphisms by $([g^{\prime}, t], ([x, g], \sigma)) \mapsto ([g^{\prime}, t], (xg, \sigma))$. 
\end{example}

\subsection{Twisted Loop space}\label{twistedloopspace}

In this section we give a loop space construction of twisted quasi-elliptic cohomology other than the twisted orbifold loop space in Example \ref{twistedorbloop}. This is a twisted version of the loop space in Definition \ref{catequivloop}.

\begin{definition}For a $G$-space $X$, we define a  category of twisted equivariant loop space $Loop^{twist}(X/\!\!/ G)$. The objects of it are diagrams
\[\xymatrix{\mathbb{T} & P \ar[l]_{\rho} \ar@/^1pc/[rr]^{f'}  & Q\ar[l]^{\pi}\ar[r]_f &X }\] consisting of a principal $G$-bundle $P$ over $\mathbb{T}$, 
a principal $\mathbb{T}$-bundle $Q$ over $P$, a $G$-equivariant map $f'$ and $f=\pi\circ f'$.

A morphism from one object $\xymatrix{\mathbb{T} & P_1 \ar[l]_{\rho_1} \ar@/^1pc/[rr]^{f'_1}  & Q_1\ar[l]^{\pi_1}\ar[r]_{f_1} &X }$ to another object $\xymatrix{\mathbb{T} & P_2 \ar[l]_{\rho_2} \ar@/^1pc/[rr]^{f'_2}  & Q_2\ar[l]^{\pi_2}\ar[r]_{f_2} &X }$ is of the form $(\alpha, \beta, t)$
\[ \xymatrix{ Q_1 \ar[r]^{\alpha} \ar[d]_{\pi_1} & Q_2\ar[d]^{\pi_2} \\
P_1 \ar[r]^{\beta} \ar[d]_{\rho_1} &P_2\ar[d]^{\rho_2} \\ \mathbb{T}\ar[r]^{t} &\mathbb{T}} \]
where $t$ is a rotation on the circle $\mathbb{T}$, $\beta$ is a bundle isomorphism covering $t$ and $\alpha$ is a bundle isomorphism covering $\beta$. In addition, $f_1= f_2\circ \alpha$ and $f_1'= f_2'\circ \beta$.

\end{definition}

The constant twisted equivariant loops are those objects in $Loop^{twist}(X/\!\!/ G)$ \[\xymatrix{\mathbb{T} & P \ar[l]_{\rho} \ar@/^1pc/[rr]^{f'}  & Q\ar[l]^{\pi}\ar[r]_f &X }\] with both $f$ and $f'$ constant maps. If  $P$ is the principal $G$-bundle $P_g$ classified by  $g\in G$, then the image of $f$ 
consists of a single point $x\in X^g$. Each element $[ t, h]\in \Lambda_{G}(g)$ gives a bundle isomorphism from $P$ to itself covering the rotation $t$ of $\mathbb{T}$. In addition, each element $[a, [t, h]] \in \Lambda_{G}^{\theta_g}(g)$ gives a bundle isomorphism from $Q$ to itself covering $[t, h]$, 
where $\theta_g$ is some element in $ H^2(BC_G(g);U(1))$.   Thus, we have the conclusion below.

\begin{proposition}

The groupoid $\Lambda^{\alpha}(X/\!\!/G)$ in \eqref{lambdaalpha} is a subgroupoid of
\[Loop^{twist}(X/\!\!/G)\] with the constant loops $\prod\limits_{g\in G^{tors}_{conj}}X^g$ as objects.

\end{proposition}

\subsection{The Chern Character map for twisted quasi-elliptic cohomology}

In this subsection  we construct the Chern character maps for quasi-elliptic cohomology and twisted quasi-elliptic cohomology.

\subsubsection{The Chern Character map for $QEll$ }\label{QEllChernS}

We first define the Chern character of $QEll^{\ast}_{G}(X)$, based on the definition of Chern character map for any generalized cohomology theory given in \cite[Definition 4.9]{FSS2023}. 
The construction is given below.

Consider the diagram
\begin{equation}\xymatrix{1\ar[r] &C_G(g)\ar[r]\ar[d]_{=} & \mathbb{T}\times C_G(g) \ar[d]^{c_{g}}\ar[r] &\mathbb{T}\ar[r] \ar[d] &1\\ 1\ar[r] &C_G(g)
\ar[r] &\Lambda_G(g) \ar[r] &\mathbb{T}\ar[r] &1}\label{diag1chern} \end{equation} where the middle vertical map sends $(t, g)$ to $[lt, g]$ and the right vertical map sends $e^{2\pi i t}$
to $e^{2\pi i lt}$ with $l$ the order of $g$. Let \[c_{g}^*: K^*_{\Lambda_G(g)}(X^{g})\otimes \mathbb{C}\longrightarrow
K^*_{\mathbb{T}\times C_G(g)}(X^{g})\otimes \mathbb{C}\] denote the corresponding restriction map.

The Chern character map is constructed as the composition
\begin{align*}QEll^{\ast}_G(X)\otimes \mathbb{C}&=\prod_{[g]\in G_{conj}}K^\ast_{\Lambda_G(g)}(X^{g})\otimes \mathbb{C}\\
&\buildrel{c^*}\over\longrightarrow \prod_{[g]\in G_{conj}}K^\ast_{\mathbb{T}\times C_G(g)}(X^{g})\otimes \mathbb{C}\\
&\buildrel{\cong}\over\longrightarrow \prod_{[g]\in G_{conj}}K^\ast_{C_G(g)}(X^{g})\otimes \mathbb{Z}[q^{\pm}]\otimes \mathbb{C} \\
&\buildrel{AS}\over\longrightarrow \prod_{[g]\in G_{conj}}(\prod_{[h]\in C_G(g)_{conj}}(K^\ast(X^{g, h})\otimes \mathbb{C})^{C^{(2)}_G(g,h)}\otimes \mathbb{Z}[q^{\pm}])\\
&\buildrel{ch}\over\longrightarrow \prod_{[g, h] \in \mathcal{C}[G]}(H^{\ast}(X^{g, h})\otimes \mathbb{C})^{C^{(2)}_G(g, h)}\otimes \mathbb{Z}[q^{\pm}]\\
\end{align*}

The first map $c^*$ is the product $\prod\limits_{[g]\in G_{conj}}c^*_{g}$ of restriction maps. The property of equivariant K-theory implies that the second map is an isomorphism. The third map is the product of Atiyah-Segal maps in \cite[Theorem 2]{atiyah1989}
\[K_G^\ast(X)\otimes\mathbb{C}\buildrel\cong\over\longrightarrow \prod_{[g]\in G_{conj}} (K^\ast(X^{g})\otimes \mathbb{C})^{C_G(g)}.\] The fourth map is the product of Chern character maps of K-theory. 

Note that, other than the first map $c^*$, all the other maps in the composition are isomorphisms.

\subsubsection{The Chern Character map for twisted  $QEll$ }\label{twistedChernS} 

In this subsection we construct the twisted Chern character map of the twisted quasi-elliptic cohomology. By the definition of Chern character map for any generalized cohomology theory given in \cite[Definition 4.9]{FSS2023}, the twisted Chern character map deserves its name. 

Based on the map $c^*$ in the construction of Chern character map in Section \ref{QEllChernS}, we construct a map $p^*$. Let
\begin{equation}\label{twistchern1}
p_{\sigma}: \mathbb{T}\times C^{\theta_{\sigma}}_G(\sigma) \longrightarrow \Lambda^{\theta_{\sigma}}_G(\sigma)
\end{equation}
denote the map sending $(t, (a,g))$ to $[Nt, (a,g)]$, where $\theta_{\sigma}$ is the $2-$cocycle defined in Section \ref{twistedqell} and  $N$ is the order of $(0,\sigma)$ in $C^{\theta_{\sigma}}_G(\sigma)$. Let \[p_{\sigma}^*:K^{\theta_\sigma+ \ast}_{\Lambda_G(\sigma)}(X^{\sigma})\otimes \C \longrightarrow K^{\theta_\sigma + \ast }_{\mathbb{T}\times C_G(\sigma)}(X^{\sigma})\otimes \C\] denote the restriction map. Define \[p^*:=\prod_{[\sigma]\in G_{conj}} p_{\sigma}^*.\]

The twisted Chern character map is constructed as the composite
\begin{align*}\label{jer}
QEll^{\alpha+\ast}_G(X)\otimes \C&=\prod_{[\sigma]}K^{\theta_\sigma+\ast}_{\Lambda_G(\sigma)}(X^{\sigma})\otimes \C \\
&\buildrel{p^*}\over\longrightarrow \prod_{[\sigma]\in G_{conj}}K^{\theta_\sigma+\ast}_{\mathbb{T}\times C_G(\sigma)}(X^{\sigma})\otimes \C \\
&\buildrel{\cong}\over\longrightarrow \prod_{[\sigma]\in G_{conj}}K_{C_G(\sigma)}^{\theta_\sigma+\ast}(X^{\sigma})\otimes \C \otimes \mathbb{Z}[q^{\pm}] \\
&\buildrel{AS}\over\longrightarrow \prod_{[\sigma]\in G_{conj}}(\prod_{[\tau]\in C_G(\sigma)_{conj}}(K^\ast(X^{\sigma, \tau})\otimes L^{\theta_\sigma}_{\tau})^{C^{(2)}_{G}(\sigma, \tau)} \otimes\mathbb{Z}[q^{\pm}]) \\
&\buildrel{ch}\over\longrightarrow \prod_{[\sigma, \tau] \in \mathcal{C}[G]}(H^\ast(X^{\sigma, \tau}) \otimes L^{\theta_\sigma}_{\tau})^{C^{(2)}_G(\sigma,\tau)}  \otimes  \mathbb{Z}[q^{\pm}]
\end{align*}
where $h \in C^{(2)}_G(\sigma,\tau)$ acts on $L^{\theta_\sigma}_{\tau} \cong \C$ as multiplication by $\theta_\sigma (h,\tau)\theta_\sigma(\tau,h)^{-1}$.

In the composition above, the first map $p^*$ is the product of the restriction maps $p_{\sigma}^*$ induced by the group homomorphisms $p_{\sigma}$. 
If $M$ is the order of $(0,\sigma)$,     
then the kernel of $p^*_{\sigma}$
\[
\ker(p^*_\sigma) = \{(e^{2\pi i \frac{-m}{M}}, (\theta_\sigma(\sigma,\sigma) + ... + \theta_\sigma(\sigma,\sigma^{m-1}), \sigma^{m})) \in \mathbb{T}\times C^{\theta_{\sigma}}_G(\sigma) \mid m \in \Z \},
\]
which acts trivially on $X^\sigma$. The image of $p_{\sigma}^*$ is generated by the $\mathbb{T}\times C^{\theta_{\sigma}}_G(\sigma)$-vector bundles with trivial $\ker(p^*_\sigma)$-action on fibers. 
The second map is 
the isomorphism in \cite[Lemma 7.3]{adem2003} since the circle group $\T$ acts on each $X^\sigma$ trivially.
The third map is the twisted Atiyah-Segal map for twisted equivariant K-theory, which is proved in \cite[Theorem 7.4]{adem2003}.
The fourth map is the product of the Chern character maps of twisted K-theory. 

Below we provide the explicit description of the twisted Chern character map.
Let $\bigoplus\limits_{[\sigma]} E_\sigma$ be an element in twisted quasi-elliptic cohomology. Recall that $q$ is the character of the defining representation of $\mathbb{T}$. The pullback
\[
\bigoplus_{[\sigma]} p_\sigma^* E_\sigma
\]
splits as a direct sum
\begin{equation}\label{initial}
\bigoplus_{[\sigma]}  \bigoplus_{n\in\mathbb{Z}} (p^*E_\sigma)_n \otimes q^n,
\end{equation}
where we have written $p$ for $p_\sigma$, to simplify notation.

The twisted Atiyah-Segal map sends an element of the form \eqref{initial} to 
\begin{equation}\label{emm}
\bigoplus_{[\sigma,\tau]} \bigoplus_{n \in \Z} \bigoplus_{\xi(\tau)} \xi(\tau) (p^*E_\sigma)_n|_{X^{\sigma,\tau}} \otimes \theta_{\sigma}(-,\tau) \theta_\sigma(\tau,-)^{-1} \otimes q^n
\end{equation}
In \eqref{emm} above, $\xi(\tau)$ runs over the eigenvalues of $\tau$ and $n$ denotes the component where $\mathbb{T}$ acts by the eigenvalue $e^{ i \theta n}$.

Finally, the Chern character map sends the element \eqref{emm} to
\begin{equation}\label{d}
\left( \bigoplus_{[\sigma,\tau]} \bigoplus_n \bigoplus_{\xi(\tau)} \xi(\tau) ch( [(p^*E_\sigma)_n|_{X^{\sigma,\tau}}]) \right) \otimes \theta_{\sigma}(-,\tau) \theta_\sigma(\tau,-)^{-1} \otimes q^n.
\end{equation}

\begin{remark}
The composite of the final two maps of the twisted Chern character map is the same as the map in  \cite[Theorem 3.9]{FHT07}, tensored with $\Z[q^\pm]$. The twisted cohomology
\[
^{\theta_{\sigma}}H(X^{\sigma,\tau}; \, ^{\theta_{\sigma}}\mathcal{L}(\tau))
\]
which is the target of the map in \cite[Theorem 3.9]{FHT07} is the same as
\[
H(X^{\sigma, \tau}) \otimes L^{\theta_\sigma}_{\tau}.
\]
\end{remark}

\begin{remark}
    In \cite[Section 3.6]{huanyoung2022}, Young and the author construct the elliptic Pontryagin character for the Real twisted quasi-elliptic cohomology, which is the Real version of the twisted Chern character in this section.
\end{remark}

\section{Some Computations of Twisted Quasi-elliptic cohomology} \label{compute:twisted:qell}
Elliptic cohomology theory is usually very difficult to compute. The computation of twisted equivariant elliptic cohomology theory is even much harder. However, twisted quasi-elliptic cohomology theory, constructed as orbifold K-theory of a loop space, is computable. 

\subsection{Quasi-elliptic cohomology of $S^1$} \label{Ex:QEll:S1}

We start the computation with two basic examples.
\begin{example} [Rotation on $S^1$]

Let $\Z/N$ denote the cyclic group with $N$ elements. Consider the rotation action of it on the circle $S^1$. Since the rotation action is free.
The fix point space $ (S^1)^m$ by $m\in \Z/N$ is \[ (S^1)^m = \begin{cases} S^1, &\text{ if } m = 0 \\
\varnothing, &\text{  otherwise.}
\end{cases}
\]
In addition, 
\[
\Lambda_{\Z/N}(0) \cong   \Z/N \times \T.
\]

Thus, by the properties of equivariant K-theories \cite[Proposition 2.1,  Proposition 2.2]{segal1968b},   
\begin{align*}QEll_{\Z/N}(S^1) &\cong K_{\Lambda_{\Z/N}(0)} (S^1) \cong K_{\Z/N}(S^1)\otimes K_{\T}(\pt) \\
&\cong K(S^1/(\Z/N))\otimes \Z[q^{\pm}]
\cong K(S^1) \otimes \Z[q^{\pm}] = \Z \otimes \Z[q^{\pm}] \\
&= \Z[q^{\pm}] . 
\end{align*}

\end{example}

\begin{example}[Reflection on $S^1$]
Let $\Z/2 = \{1, \tau\}$ act on $S^1$ by reflection. The fixed point spaces are 
$$(S^1)^1= S^1; \quad (S^1)^{\tau} = S^0.$$
And
$$\Lambda_{\Z/2}(1) \cong \Z/2\times \T; \quad \Lambda_{\Z/2}(\tau) \cong (\Z/2\times \R) / \langle (\tau, -1)\rangle. $$
The two factors in $QEll_{\Z/2}(S^1)$ are computed below. By Example \ref{ex3.3:huan2018}, 
\begin{align*}K_{\Lambda_{\Z/2}(\tau)} ((S^1)^{\tau}) &\cong K_{\Lambda_{\Z/2}(\tau)}(\pt) \oplus K_{\Lambda_{\Z/2}(\tau)}(\pt) \\
&\cong \Z[q^{\pm}, x]/ \langle x^2-q\rangle \oplus \Z[q^{\pm}, x]/\langle x^2-q\rangle. \end{align*} 
In addition
\begin{equation}
K_{\Lambda_{\Z/2}(1)} ((S^1)^1) \cong K_{\Z/2} (S^1)\otimes K_{\T}(\pt) \cong \Z\otimes \Z[q^{\pm}] \cong  \Z[q^{\pm}].  \label{ex:ref:fac2}\end{equation}
We can view $S^1$ as two copies of $D^1$ glued via the boundary $S^0$. A complex vector bundles  over $S^1$ with the reflection is two copies of complex vector bundles over $D^1$ glued via the fibres on $S^0$  by identity. Thus,  the $\Z/2$-equivariant vector bundle over $S^1$ is equivalent to the trivial bundle over $S^1$. So we have $K_{\Z/2} (S^1) =\Z$ in \eqref{ex:ref:fac2}.

In conclusion, \begin{align*}
    QEll_{\Z/2}(S^1) & = K_{\Lambda_{\Z/2}(\tau)} ((S^1)^{\tau})  \oplus K_{\Lambda_{\Z/2}(1)} ((S^1)^1) \\
& \cong (\Z[q^{\pm}, x]/ \langle x^2-q\rangle \oplus \Z[q^{\pm}, x]/\langle x^2-q\rangle )\oplus \Z[q^{\pm}].
\end{align*}

\end{example}

\subsection{Quasi-elliptic cohomology of $4$-sphere acted on by a finite subgroup of $SU(2)$} \label{ex:qell:s4}

Quasi-elliptic cohomology theory is conjectured in \cite{SatiSchreiber2022}
as a particularly suitable approximation to equivariant
$4$-th $Cohomotopy$, which classifies the
charges carried by M-branes in M-theory in a way that is analogous to
the traditional idea that complex K-theory classifies the charges of D-branes in
string theory. 
In this subsection we compute quasi-elliptic cohomology of 4-spheres under the action by some finite subgroups that are the most interesting isotropy groups where the M5-branes may sit.

Let $G$ denote any finite subgroup of $SU(2)$. 
First we explain how the group $G$ acts on $S^4$.
We have the standard orthogonal $SO(5)$-action on $\R^5$ and also on the subspace $S^4\subset \R^5$. The covering map \[Spin(5) \longrightarrow SO(5)\] makes $S^4$ a well-defined $Spin(5)$-space. 
The $G$-action on $S^4$ is induced by  the composition 
\begin{equation} i_G: G\hookrightarrow Spin(3)\buildrel{p_1}\over\longrightarrow Spin(3)\times Spin(3) =Spin(4) \hookrightarrow Spin(5) \label{gpact:def}\end{equation}  where $p_1$ is the projection to the first factor of the product group. 

We give the explicit formula of the $G$-action below.
The group of unit quaternions is isomorphic to $SU(2)\cong Spin(3)$ via the correspondence \[a+bi+cj+dk \mapsto \left[ {\begin{array}{cc} a+bi & c+di \\  -c+di & a-bi
\end{array}} \right].  \] In view of this, 
$Spin(4)$ can be described as the group \[ \{ \left[ {\begin{array}{cc} q & 0 \\  0 & r
\end{array}} \right] \mid q, r\in \mathbb{H}, |q|=|r|=1.\}, \] and $Spin(5)$ can be identified with the  quaternionic unitary group.
Thus, as indicated in \cite[pp.263]{porteous_1995}, the inclusion  $Spin(4)\hookrightarrow Spin(5)$ is given by the formula
\begin{equation} \left[ {\begin{array}{cc} q & 0 \\  0 & r
\end{array}} \right] \mapsto \left[ {\begin{array}{cc} q & 0 \\  0 & r
\end{array}} \right]. \label{spin4act} \end{equation}

In addition, as shown in \cite[pp.151]{porteous_1995}, the rotation of $\mathbb{R}^4$ represented by 
\[\left[ {\begin{array}{cc} q & 0 \\  0 & r
\end{array}} \right] \in Spin(4)\] is given by the map \begin{equation} \label{spin(4)action} \left[ {\begin{array}{cc} y & 0 \\  0 & \overline{y}
\end{array}} \right] \mapsto 
\left[ {\begin{array}{cc} q & 0 \\  0 & r
\end{array}} \right] \left[ {\begin{array}{cc} y & 0 \\  0 & \overline{y}
\end{array}} \right] \widehat{\left[ {\begin{array}{cc} q & 0 \\  0 &r
\end{array}} \right]}^{-1} = \left[ {\begin{array}{cc} qy\overline{r} & 0 \\  0 & r\overline{y}\overline{q}
\end{array}} \right].\end{equation} where $\R^4$ is identified with the linear space \[\{ \left[ {\begin{array}{cc} y & 0 \\  0 & \overline{y}
\end{array}} \right] \mid  y\in \mathbb{H}.\} .\]

Then, the group $Spin(4)\subset Spin(5)$ acts on $S^4\subset \mathbb{R}^5$ via the composition \begin{equation} Spin(4)\rightarrow SO(4)\xrightarrow{A\mapsto \left[ {\begin{array}{cc} A & 0 \\  0 & 1
\end{array}} \right]}SO(5) \label{spins5} \end{equation}
with the standard orthogonal action.

\bigskip

Before we compute the examples, we recall the  classification of the finite subgroups of $Spin(3)\cong SU(2)$. There are many references for the classification, \cite[Chapter XIII]{dickson2014algebraic}, \cite{Stekolshchik_Coxeter_Mckay}, \cite{nlab:finite_rotation_group} etc. The finite subgroups of $SU(2)$ are classified as: \begin{itemize}\item 
the cyclic group of order $n$\[G_n:=\{\left[ {\begin{array}{cc} e^{\frac{2\pi ki}{n}} & 0 \\  0 &e^{-\frac{2\pi ki}{n}} 
\end{array}} \right] \mid k\in\Z \};\]
\item  the dicyclic group of order $4n$ \[2D_{2n}:=\langle G_{2n}, \left[ {\begin{array}{cc} 0 & 1 \\  -1 & 0
\end{array}} \right] \rangle; \]  
\item  the binary tetrahedral group $E_6$;
\item the binary octahedral group $E_7$; 
\item the binary icosahedral group $E_8$; \end{itemize}
where $n$
is any positive integer. 

In the computation below, we use the symbol \[A_\theta\] to denote the matrix \[
\left[ {\begin{array}{cc} e^{\theta i} & 0 \\  0 &e^{-\theta i} 
\end{array}} \right] .\]
\begin{example} \label{GnactS4}

In this example, we compute $QEll_{G_n}(S^4)$, where \[G_n:=\{\left[ {\begin{array}{cc} e^{\frac{2\pi ki}{n}} & 0 \\  0 &e^{-\frac{2\pi ki}{n}} 
\end{array}} \right] \mid k\in\Z \}.\] 
An element in $G_n$ is of the form $A_{\theta}$ with $\theta =  \frac{2\pi m}{n}$ for some integer $m$.
Since $G_n$ is abelian, each conjugacy class in it has exactly one element.
Below we compute the factor in $QEll_{G_n}(S^4)$ corresponding to each conjugacy class.

If $n\nmid m$, $(S^4)^{A_{\theta}} \cong S^0$. 
We have  \begin{align*} K_{\Lambda_{G_n}(A_\theta)}((S^4)^{A_{\theta}} ) &\cong K_{\Lambda_{\mathbb{Z}/n} (m)}(S^0) \cong R\Lambda_{\mathbb{Z}/n}(m) \oplus 
R\Lambda_{\mathbb{Z}/n}(m) \\ 
&\cong \mathbb{Z}[q^{\pm}, x]/\langle x^n-q^m \rangle \oplus \mathbb{Z}[q^{\pm}, x]/\langle x^n-q^m \rangle. \end{align*}
The last step is by the computation in Example \ref{ex3.3:huan2018}. 

If $n \mid m$, $(S^4)^{A_{\theta}} \cong S^4$. \begin{align*}
    K_{\Lambda_{G_n}(A_{\theta})}((S^4)^{A_{\theta}}) &\cong K_{\mathbb{Z}/n\times \mathbb{T}}(S^4) \cong 
    K_{\mathbb{Z}/n}(S^4)\otimes \mathbb{Z}[q^{\pm}] \\ &\buildrel{(*)}\over\cong K_{\mathbb{Z}/n}(S^0)\otimes \mathbb{Z}[q^{\pm}] \cong
    (R(\mathbb{Z}/n)\oplus R(\mathbb{Z}/n))\otimes \mathbb{Z}[q^{\pm}] \\ &\cong \mathbb{Z}[q^{\pm}, x]/\langle x^n-1\rangle \oplus\mathbb{Z}[q^{\pm}, x]/\langle x^n-1\rangle .
\end{align*} where the isomorphism $(*)$ is obtained from the equivariant Bott periodicity \cite[Theorem 4.3]{atiyah1968_Bott} and that the action of $\mathbb{Z}/n$ on the north pole and south pole is trivial.

In conclusion, \begin{align*}
    QEll_{G_n}(S^4) &= \prod^n_{m=0}  K_{\Lambda_{G_n}(A_{ \frac{2\pi  m}{n}})}((S^4)^{A_{  \frac{2\pi  m}{n}}} ) \\
    &\cong \prod^n_{m=0} \mathbb{Z}[q^{\pm}, x]/\langle x^n-q^m \rangle \oplus \mathbb{Z}[q^{\pm}, x]/\langle x^n-q^m \rangle.
\end{align*}
\end{example}

\begin{example} \label{2D2nS4}

In this example we compute $QEll_{2D_{2n}} (S^4)$, where $$2D_{2n}=\langle G_{2n}, \left[ {\begin{array}{cc} 0 & -1 \\  1 &  0 
\end{array}} \right]\rangle $$ with $n$ a positive integer, and $G_{2n}$ is the cyclic group generated by \[ A_{\frac{2\pi}{2n}}. \] 
We will use $\tau$ to denote the matrix \[\left[ {\begin{array}{cc} 0 & -1 \\  1 &  0 
\end{array}} \right].\] 

In $2D_{2n}$  there are $n+3$ conjugacy classes. They are:
\begin{enumerate}
\item $\{I\}$, \item $\{-I\}$, \item 
$\{A_{\frac{\pi}{n}}, A^{-1}_{\frac{\pi}{n}}\}$, 
$\{A^2_{\frac{\pi}{n}}, A^{-2}_{\frac{\pi}{n}}\}$, $\cdots$, $\{A^{n-1}_{\frac{\pi}{n}}, A^{-(n-1)}_{\frac{\pi }{n}}\}$,  
\item 
$\{\tau, \tau A^2_{\frac{\pi}{n}}, \tau A^4_{\frac{\pi}{n}}\cdots \tau A^{2n-2}_{\frac{\pi}{n}}\} $, 
\item $\{\tau A_{\frac{\pi}{n}}, \tau A^3_{\frac{\pi}{n}}, \cdots \tau A^{2n-1}_{\frac{\pi}{n}} \}$, \end{enumerate} where the first two form the centre of the group. 

Next we compute the factor in $QEll_{2D_{2n}} (S^4)$ corresponding to each conjugacy class below.
\begin{enumerate}
    \item 
First we consider the conjugacy class represented by $I$.
The centraliser $$C_{2D_{2n}}(I) = 2D_{2n},$$ and, thus,  
$\Lambda_{2D_{2n}}(I)\cong 2D_{2n} \times \mathbb{T}$. The corresponding equivariant K-theory
\begin{align*}
K_{\Lambda_{2D_{2n}}(I)}((S^4)^I) &\cong K_{2D_{2n} \times \mathbb{T}}(S^4) \cong K_{2D_{2n}}(S^4) \otimes \mathbb{Z}[q^{\pm}] \\
& \buildrel{(\ast)}\over\cong K_{2D_{2n}} (S^0) \otimes \mathbb{Z}[q^{\pm}] \cong (R(2D_{2n})\oplus R (2D_{2n})) \otimes \mathbb{Z}[q^{\pm}],
\end{align*} where the step $(\ast)$ is also obtained from the equivariant Bott periodicity.

\item
Then we consider the conjugacy class represented by $-I$.
The centraliser $$C_{2D_{2n}}(-I) = 2D_{2n}.$$ 
And the fixed point space $(S^4)^{-I}=S^4$. 
The group $2D_{2n}$ fits into the short exact sequence\[0\longrightarrow \mathbb{Z}/2 \longrightarrow 2D_{2n} \buildrel{\pi}\over\longrightarrow D_{2n}\longrightarrow 0, \] where $D_{2n}$ is the dihedral group with $2n$ elements.
Then, by Lemma \ref{dcld2}, 
\begin{align*}
K_{\Lambda_{2D_{2n}}(-I)}((S^4)^{-I}) &\cong K_{\Lambda_{2D_{2n}}(-I)}(S^0) \cong  K_{\Lambda_{2D_{2n}}(-I)}(\pt)\oplus K_{\Lambda_{2D_{2n}}(-I)}(\pt)\\
&\cong K_{\Lambda_{D_{2n}}(I)}(\pt ) \oplus K_{\Lambda_{D_{2n}}(I)}(\pt ) 
\oplus K^{[\widetilde{\Lambda_{D_{2n}}(I)}_{\rho} ] +*}_{\Lambda_{D_{2n}}(I)}(\pt ) 
\oplus K^{[\widetilde{\Lambda_{D_{2n}}(I)}_{\rho} ] +*} K_{\Lambda_{D_{2n}}(I)}(\pt ) 
\\ &\cong K_{D_{2n}\times \T}(\pt ) \oplus K_{D_{2n}\times \T}(\pt ) 
\oplus K^{[\widetilde{(D_{2n}\times \T)}_{\rho} ] +*}_{D_{2n}\times \T}(\pt ) 
\oplus K^{[\widetilde{(D_{2n}\times \T)}_{\rho} ] +*}_{D_{2n}\times \T}(\pt ) \\
& \cong (R(D_{2n})\oplus R(D_{2n}))\otimes \mathbb{Z}[q^{\pm}] \oplus (R_{[\widetilde{D_{2n}}_{\rho}]}(D_{2n})\oplus R_{[\widetilde{D_{2n}}_{\rho}]}(D_{2n}))\otimes \Z[q^{\pm}].
\end{align*} where $\rho$ is the sign representation of $\Z/2$.

\item 
Then  we consider the conjugacy class represented by the element $A^m_{\frac{\pi }{n}}$, $m= 1, 2,\cdots n-1$.
The centralizer \[C_{2D_{2n}}(A^m_{\frac{\pi }{n}}) = G_{2n} \cong \mathbb{Z}/{2n}.\]

Thus, $\Lambda_{2D_{2n}}(A^m_{\frac{\pi }{n}})\cong \Lambda_{\mathbb{Z}/2n}(2m)$.
In addition, the fixed point space
\[(S^4)^{A^m_{\frac{\pi }{n}}} \cong S^0.\] 

So we have \begin{align*} 
K_{\Lambda_{2D_{2n}}(A^m_{\frac{\pi }{n}})} ((S^4)^{A^m_{\frac{\pi }{n}}}) & \cong  K_{\Lambda_{\mathbb{Z}/2n}(2m)} (S^0) \cong 
R\Lambda_{\mathbb{Z}/2n}(2m) \oplus R \Lambda_{\mathbb{Z}/2n}(2m) \\
&\cong \mathbb{Z}[x, q^{\pm}]/\langle x^{2n}- q^{2m}\rangle \oplus \mathbb{Z}[x, q^{\pm}]/\langle x^{2n}- q^{2m}\rangle.
\end{align*}

\item 

Then we consider the conjugacy class represented by $\tau$.  
The centralizer $C_{2D_{2n}}(\tau)= \langle \tau \rangle \cong \mathbb{Z}/4$ and the fixed point space $(S^4)^{\tau}$ is $S^0$.

Thus, \begin{align*}
    K_{\Lambda_{2D_{2n}}(\tau)}((S^4)^{\tau}) &\cong K_{\Lambda_{\mathbb{Z}/4}(1)} (S^0) \cong R\Lambda_{\mathbb{Z}/4}(1) \oplus R\Lambda_{\mathbb{Z}/4}(1) \\
    &\cong \Z[x, q^{\pm}]/\langle x^4-q\rangle \oplus \Z[x, q^{\pm}]/ \langle x^4-q\rangle .
\end{align*}

\item 

Then we deal with the last conjugacy class, which is represented by $\tau A_{\frac{2\pi}{2n}}$. There are $n$ elements in the conjugacy class, thus, the centralizer of $\tau A_{\frac{2\pi}{2n}}$ has 4 elements. Then it's direct to check that \[ C_{2D_{2n}}(\tau A_{\frac{2\pi}{2n}}) = \langle \tau A_{\frac{2\pi}{2n}}\rangle \cong \Z/4.\] In addition, the fixed point space $(S^4)^{\tau A_{\frac{2\pi}{2n}}} $ is $S^0$.

Thus, \begin{align*}
    K_{\Lambda_{2D_{2n}}(\tau  A_{\frac{2\pi}{2n}})}((S^4)^{\tau  A_{\frac{2\pi}{2n}}}) &\cong K_{\Lambda_{\mathbb{Z}/4}(1)} (S^0) \cong  R\Lambda_{\mathbb{Z}/4}(1)\oplus R\Lambda_{\mathbb{Z}/4}(1)  \\
    &\cong \Z[x, q^{\pm}]/\langle x^4-q\rangle \oplus \Z[x, q^{\pm}]/\langle x^4-q\rangle.
\end{align*}

\end{enumerate}

Thus, in conclusion, \begin{align*}
    QEll_{2D_{2n}} (S^4) = &K_{\Lambda_{2D_{2n}}(I)}((S^4)^I) \times K_{\Lambda_{2D_{2n}}(-I)}((S^4)^{-I})  \\
    &\times \prod_{m=1}^{n-1}K_{\Lambda_{2D_{2n}}(A^m_{\frac{\pi }{n}})} ((S^4)^{A^m_{\frac{\pi }{n}}}) \\
    &\times K_{\Lambda_{2D_{2n}}(\tau)}((S^4)^{\tau}) 
    \times  K_{\Lambda_{2D_{2n}}(\tau  A_{\frac{2\pi}{2n}})}((S^4)^{\tau  A_{\frac{2\pi}{2n}}}) \\
    \cong  &(R(2D_{2n})\oplus R (2D_{2n})) \otimes \mathbb{Z}[q^{\pm}] \\ & \times (R(D_{2n})\oplus R(D_{2n}))\otimes \mathbb{Z}[q^{\pm}] \oplus (R_{[\widetilde{D_{2n}}_{\rho}]}(D_{2n})\oplus R_{[\widetilde{D_{2n}}_{\rho}]}(D_{2n}))\otimes \Z[q^{\pm}] \\ &\times \prod_{m=1}^{n-1}  \mathbb{Z}[x, q^{\pm}]/\langle x^{2n}- q^{2m}\rangle \oplus \mathbb{Z}[x, q^{\pm}]/\langle x^{2n}- q^{2m}\rangle \\ &\times  \Z[x, q^{\pm}]/\langle x^4-q\rangle \oplus \Z[x, q^{\pm}]/ \langle x^4-q\rangle \\
    & \times \Z[x, q^{\pm}]/\langle x^4-q\rangle \oplus \Z[x, q^{\pm}]/\langle x^4-q\rangle, 
\end{align*}  where $\rho$ is the sign representation of $\Z/2$.

\end{example}

\begin{example}\label{E6S4}
In this example we compute $QEll_{E_6}(S^4)$ where $E_6$
is the binary tetrahedral group. 
The quaternion representation of $E_6$ is given explicitly at \cite{Phillips_Tetrahedral} and  \cite{QR:BiTetraGrp}.

We can compute the conjugacy classes in $E_6$ explicitly. A list of representatives are given in Figure \ref{E6:conj}. This list can be obtained by direct computation. A multiplication table for the binary tetrahedral group is given here \cite{multi:BiTetraGrp}. For the convenience of the readers, we apply the same symbols of the elements as those in \cite{multi:BiTetraGrp} and \cite{QR:BiTetraGrp}. 
\begin{figure} \begin{center} 
\begin{tabular}{|c | c | c |} 
 \hline 
 A representative of the conjugacy class   & Conjugacy class & Order 
 \\ \hline
$1$ & $ \{ 1 \}$ & $1$ \\
$-1$ & $\{-1\}$ & $2$ \\
$i$ & $\{\pm i, \pm j,  \pm k\} $ & $4$ \\
$a$ & $\{a, b, c, d\} $ & $6$ \\
$-a$ & $\{-a, -b, -c, -d\}$ & $3$ \\
$a^2$ & $\{a^2, b^2, c^2, d^2\}$ & $3$ \\
$-a^2$ & $ \{-a^2, -b^2, -c^2, -d^2\} $  & $6$ \\
 \hline
\end{tabular} \caption{Conjugacy classes of $E_6$}\label{E6:conj}
\end{center} \end{figure}


By the multiplication table \cite{multi:BiTetraGrp} and direct computation, we obtain the centralizers of each representative and the corresponding fixed point space, as in Figure \ref{E_6: CF}.
\begin{figure}
\begin{center}
\begin{tabular}{|c | c | c | } 
 \hline Representatives $\alpha$  &Centralizers &Fixed point spaces\\
 of Conjugacy classes & $C_{E_6}(\alpha)$ & $(S^4)^{\alpha}$
 \\ \hline
$1$ & $ E_6$ & $S^4$\\
$-1$ & $E_6$ & $ S^0$\\
$i$ & $\{\pm 1, \pm i\} \cong \mathbb{Z}/4$ & $S^0$\\
$a$ & $\{\pm 1, \pm a, \pm a^2\} \cong \mathbb{Z}/6$ & $S^0$ \\
$-a$ & $\{\pm 1, \pm a, \pm a^2\} \cong \mathbb{Z}/6$ & $S^0$\\
$a^2$ & $\{\pm I, \pm a, \pm a^2\} \cong \mathbb{Z}/6$  & $S^0$ \\
$-a^2$ & $\{\pm 1, \pm a, \pm a^2\} \cong \mathbb{Z}/6$ & $S^0$ \\
 \hline
\end{tabular} \caption{Centralizers and fixed point spaces} \label{E_6: CF}
\end{center} \end{figure}

Then the factors in $QEll_{E_6}(S^4)$ corresponding to each conjugacy class is computed below.
\begin{enumerate}
\item For the conjugacy class represented by $1$,
\begin{align*}
K_{\Lambda_{E_6}(1)} ((S^4)^1) &\cong K_{E_6\times \mathbb{T}}(S^4) \cong K_{E_6}(S^4)\otimes \mathbb{Z}[q^{\pm}] \\
&\cong K_{E_6}(S^0) \otimes \mathbb{Z}[q^{\pm}] \cong (R(E_6) \oplus R(E_6)) \otimes \mathbb{Z}[q^{\pm}].
\end{align*}

\item Then we compute the factor corresponding to the conjugacy class represented by $-1$.
We have the commutative diagram
\begin{equation}
\xymatrix{ 0\ar[r] &\mathbb{Z}/2 \ar[r] \ar@{=}[d] &E_6 \ar[r]^{\pi} \ar@{^{(}->}[d] &T_6\ar[r] \ar@{^{(}->}[d] &0 \\ 
0\ar[r] &\mathbb{Z}/2 \ar[r] &Spin(3) \ar[r]^{\pi} &SO(3)\ar[r] &0 },
\end{equation} where $T_6$ is the tetrahedral group and both the horizontal sequences are exact.

Then,   by Lemma \ref{dcld2}, we have
\begin{align*}
K_{\Lambda_{E_6}(-1)} ((S^4)^{-1}) &\cong K_{\Lambda_{T_6}(1)}(S^0) \oplus K^{ [\widetilde{\Lambda_{T_6}(1)}_{\rho}]+*}_{\Lambda_{T_6}(1)}(S^0) \\
&\cong K_{T_6\times \mathbb{T}}(S^0) \oplus K^{ [\widetilde{(T_6\times \mathbb{T})}_{\rho}]+*}_{T_6\times \mathbb{T}}(S^0) \\
&\cong K_{T_6}(S^0)\otimes \Z[q^{\pm}] \oplus K^{ [\widetilde{(T_6)}_{\rho}]+*}_{T_6}(S^0)\otimes \Z[q^{\pm}] \\
&\cong (R(T_6)\oplus R (T_6) \oplus R_{[\widetilde{(T_6)}_{\rho}]} (T_6) \oplus R_{[\widetilde{(T_6)}_{\rho}]} (T_6)) \otimes \Z[q^{\pm}].
\end{align*} where $\rho$ is the sign representation of $\Z/2$.

\item For the conjugacy class represented by $i$, we have 
\begin{align*}
K_{\Lambda_{E_6}(i)}((S^4)^i) &\cong K_{\Lambda_{\mathbb{Z}/4}(1)}(S^0) \cong R(\Lambda_{\mathbb{Z}/4}(1)) \oplus R(\Lambda_{\mathbb{Z}/4}(1)) 
\\ &\cong \Z[x, q^{\pm}]/\langle x^4- q\rangle \oplus \Z[x, q^{\pm}]/\langle x^4- q\rangle.
\end{align*}

\item For the conjugacy class represented by $a$, we have 
\begin{align*}
K_{\Lambda_{E_6}(a)}((S^4)^a) &\cong K_{\Lambda_{\mathbb{Z}/6}(1)}(S^0) \cong R(\Lambda_{\mathbb{Z}/6}(1)) \oplus R(\Lambda_{\mathbb{Z}/6}(1)) 
\\ &\cong \Z[x, q^{\pm}]/\langle x^6- q\rangle \oplus \Z[x, q^{\pm}]/\langle x^6- q\rangle.
\end{align*}

\item For the conjugacy class represented by $-a$, we have \begin{align*}
K_{\Lambda_{E_6}(-a)} ((S^4)^{-a}) &\cong K_{\Lambda_{\Z/6}(4)} (S^0) \cong R(\Lambda_{\Z/6}(4))\oplus R (\Lambda_{\Z/6}(4)) \\
&\cong \Z[x, q^{\pm}] /\langle x^6- q^4\rangle \oplus \Z[x, q^{\pm}] /\langle x^6- q^4\rangle.    
\end{align*}

\item For the conjugacy class represented by $a^2$, we have \begin{align*}
K_{\Lambda_{E_6}(a^2)} ((S^4)^{a^2}) &\cong K_{\Lambda_{\Z/6}(2)} (S^0) \cong R(\Lambda_{\Z/6}(2))\oplus R (\Lambda_{\Z/6}(2)) \\
&\cong \Z[x, q^{\pm}] /\langle x^6- q^2\rangle \oplus \Z[x, q^{\pm}] /\langle x^6- q^2\rangle    
\end{align*}

\item For the conjugacy class represented by $-a^2$, we have 
\begin{align*}
K_{\Lambda_{E_6}(-a^2)}((S^4)^{-a^2}) &\cong K_{\Lambda_{\mathbb{Z}/6}(5)}(S^0) \cong  R(\Lambda_{\mathbb{Z}/6}(5)) \oplus R(\Lambda_{\mathbb{Z}/6}(5))
\\ &\cong \Z[x, q^{\pm}]/\langle x^6- q^5\rangle \oplus \Z[x, q^{\pm}]/\langle x^6- q^5\rangle .  
\end{align*}

\end{enumerate}

Thus, in conclusion, \begin{align*}
    QEll_{E_6}(S^4) = &K_{\Lambda_{E_6}(1)} ((S^4)^1) \times K_{\Lambda_{E_6}(-1)} ((S^4)^{-1}) \times K_{\Lambda_{E_6}(i)}((S^4)^i) \\
    & \times K_{\Lambda_{E_6}(a)}((S^4)^a) \times K_{\Lambda_{E_6}(-a)} ((S^4)^{-a}) 
       \times K_{\Lambda_{E_6}(a^2)} ((S^4)^{a^2}) \\ &\times K_{\Lambda_{E_6}(-a^2)}((S^4)^{-a^2})
\\
\cong 
   &(R(E_6) \oplus R(E_6)) \otimes \mathbb{Z}[q^{\pm}]\\
  &\times (R(T_6)\oplus R (T_6) \oplus R_{[\widetilde{(T_6)}_{\rho}]} (T_6) \oplus R_{[\widetilde{(T_6)}_{\rho}]} (T_6)) \otimes \Z[q^{\pm}]
   \\ & \times  \Z[x, q^{\pm}]/\langle x^4- q\rangle \oplus \Z[x, q^{\pm}]/\langle x^4- q\rangle \\
   & \times \Z[x, q^{\pm}]/\langle x^6- q\rangle \oplus \Z[x, q^{\pm}]/\langle x^6- q\rangle \\
    &\times  \Z[x, q^{\pm}] /\langle x^6- q^4\rangle \oplus \Z[x, q^{\pm}] /\langle x^6- q^4\rangle \\
    &\times \Z[x, q^{\pm}] /\langle x^6- q^2\rangle \oplus \Z[x, q^{\pm}] /\langle x^6- q^2\rangle\\
&    \times \Z[x, q^{\pm}]/\langle x^6- q^5\rangle \oplus \Z[x, q^{\pm}]/\langle x^6- q^5\rangle, 
\end{align*}  where $\rho$ is the sign representation of $\Z/2$.
    
\end{example}

\begin{example} \label{E7S4}
In this example we compute $QEll_{E_7}(S^4)$ where $E_7$ is the 
binary octahedral group. 

A presentation of $E_7$ is given as  
\[E_7= \langle s, t \mid r^2=s^3= t^4 = rst = -1\rangle.  \]
We can get immediately that $r=st$. Equivalently, there is a quaternion presentation of $E_7$ given by the embedding  \[E_7 \rightarrow  \mathbb{H}\] 
 sending $s$ to
$\frac{1}{2}(1+i+j+k)$,  $t$ to $\frac{1}{\sqrt{2}}(1+i)$, and $r$ to $\frac{1}{\sqrt{2}} (i+j)$. 

By \cite{McKay1980} and direct computation, 
we get Figure \ref{E7:conj:c:fps}, which provides a list of the representatives of the conjugacy classes of $E_7$, the centralizers of each representative, and the corresponding fixed point spaces.

\begin{figure}
\begin{center}
\begin{tabular}{|c | c | c | } 
 \hline Representatives $\beta$  &Centralizers &Fixed point spaces\\
 of Conjugacy classes & $C_{E_7}(\beta)$ & $(S^4)^{\beta}$
 \\ \hline
$1$ & $ E_7$ & $S^4$\\
$-1$ & $E_7$ & $ S^0$\\
$i= t^2$ & $\langle  t \rangle \cong \mathbb{Z}/8$ & $S^0$\\
$s$ & $\langle s\rangle \cong \mathbb{Z}/6$ & $S^0$\\
$-s = s^4$ & $\langle s\rangle \cong \mathbb{Z}/6$ & $S^0$ \\
$r$ & $\langle r\rangle \cong \mathbb{Z}/4$ & $S^0$ \\
$t$ & $\langle t\rangle \cong \mathbb{Z}/8$  & $S^0$ \\
$-t = t^5$ & $\langle t\rangle \cong \mathbb{Z}/8$  & $S^0$ \\
 \hline
\end{tabular} \caption{Conjugacy classes, centralizers and fixed point spaces} \label{E7:conj:c:fps}
\end{center} \end{figure}

Below we give the factor of $QEll_{E_7}(S^4)$ corresponding to each conjugacy class.
\begin{enumerate}
    \item For the conjugacy class of $1$,
    \begin{align*}K_{\Lambda_{E_7}(1)}((S^4)^1) &\cong K_{E_7\times \mathbb{T}} (S^4) \cong K_{E_7}(S^4)\otimes R\mathbb{T} \\
    &\cong K_{E_7}(S^0) \otimes \mathbb{Z}[q^{\pm}] \cong (RE_7 \oplus RE_7)\otimes \mathbb{Z}[q^{\pm}].\end{align*}

\item Then we consider  the conjugacy class of $-1$.

There is a  commutative diagram with each horizontal sequence exact.
\begin{equation}
\xymatrix{ 0\ar[r] &\mathbb{Z}/2 \ar[r] \ar@{=}[d] &E_7 \ar[r]^{\pi} \ar@{^{(}->}[d] &T_7\ar[r] \ar@{^{(}->}[d] &0 \\ 
0\ar[r] &\mathbb{Z}/2 \ar[r] &Spin(3) \ar[r]^{\pi} &SO(3)\ar[r] &0 }
\end{equation} where 
$T_7$ is the octahedral group. 

Thus, by Lemma \ref{dcld2}, we have
\begin{align*}
K_{\Lambda_{E_7}(-1)} ((S^4)^{-1}) &\cong K_{\Lambda_{T_7}(1)}(S^0) \oplus K^{ [\widetilde{\Lambda_{T_7}(1)}_{\rho}]+*}_{\Lambda_{T_7}(1)}(S^0) \\
&\cong K_{T_7\times \mathbb{T}}(S^0) \oplus K^{ [\widetilde{(T_7\times \mathbb{T})}_{\rho}]+*}_{T_7\times \mathbb{T}}(S^0) \\
&\cong K_{T_7}(S^0)\otimes \Z[q^{\pm}] \oplus K^{ [\widetilde{(T_7)}_{\rho}]+*}_{T_7}(S^0)\otimes \Z[q^{\pm}] \\
&\cong (R(T_7)\oplus R (T_7) \oplus R_{[\widetilde{(T_7)}_{\rho}]} (T_7) \oplus R_{[\widetilde{(T_7)}_{\rho}]} (T_7)) \otimes \Z[q^{\pm}].
\end{align*} where $\rho$ is the sign representation of $\Z/2$.

    \item 

    For the conjugacy class of $i$, 
\begin{align*}
     K_{\Lambda_{E_7}(i)} ((S^4)^i)    &\cong  K_{\Lambda_{\mathbb{Z}/8}(2) }(S^0) \cong R\Lambda_{\mathbb{Z}/8}(2)  \oplus R\Lambda_{\mathbb{Z}/8}(2)  \\
      &\cong   \Z[x, q^{\pm }] /\langle x^8-q^2\rangle \oplus \Z[x, q^{\pm }] /\langle x^8-q^2\rangle.
    \end{align*}

    \item For the conjugacy class of  $s= \frac{1}{2}(1+i+j+k) $,  \begin{align*}
      K_{\Lambda_{E_7}(s)} ((S^4)^s)   &\cong  K_{\Lambda_{\mathbb{Z}/6}(1) }(S^0) \cong R\Lambda_{\mathbb{Z}/6}(1)  \oplus R\Lambda_{\mathbb{Z}/6}(1)  \\
      &\cong    \Z[x, q^{\pm }] /\langle x^6-q\rangle \oplus \Z[x, q^{\pm }] /\langle x^6-q\rangle.
    \end{align*}

\item For the conjugacy class of  $-s= -\frac{1}{2}(1+i+j+k) $,  \begin{align*}
       K_{\Lambda_{E_7}(-s)} ((S^4)^{-s})  &\cong K_{\Lambda_{\mathbb{Z}/6}(4) }(S^0) \cong R\Lambda_{\mathbb{Z}/6}(4) \oplus R\Lambda_{\mathbb{Z}/6}(4)  \\
      &\cong   \Z[x, q^{\pm }] /\langle x^6-q^4\rangle \oplus \Z[x, q^{\pm }] /\langle x^6-q^4\rangle.
    \end{align*}

\item For the conjugacy class of $r=\frac{1}{\sqrt{2}} (i+j)$, \begin{align*}
        K_{\Lambda_{E_7}(r)} ((S^4)^{r}) &\cong  K_{\Lambda_{\mathbb{Z}/4}(1) }(S^0) \cong R\Lambda_{\mathbb{Z}/4}(1) \oplus R\Lambda_{\mathbb{Z}/4}(1)  \\
      &\cong    \Z[x, q^{\pm }] /\langle x^4-q\rangle \oplus \Z[x, q^{\pm }] /\langle x^4-q\rangle.
    \end{align*}

\item For the conjugacy class of $t= \frac{1}{\sqrt{2}}(1+i)$, 
\begin{align*}
     K_{\Lambda_{E_7}(t)} ((S^4)^{t})    &\cong  K_{\Lambda_{\mathbb{Z}/8}(1) }(S^0) \cong R\Lambda_{\mathbb{Z}/8}(1) \oplus R\Lambda_{\mathbb{Z}/8}(1)  \\
      &\cong   \Z[x, q^{\pm }] /\langle x^8-q\rangle \oplus \Z[x, q^{\pm }] /\langle x^8-q\rangle.
    \end{align*}

\item For the conjugacy class of $-t= -\frac{1}{\sqrt{2}}(1+i)$, 
 \begin{align*}
       K_{\Lambda_{E_7}(-t)} ((S^4)^{-t})  &\cong  K_{\Lambda_{\mathbb{Z}/8}(5) }(S^0) \cong R\Lambda_{\mathbb{Z}/8}(5) \oplus R\Lambda_{\mathbb{Z}/8}(5)  \\
      &\cong    \Z[x, q^{\pm }] /\langle x^8-q^5\rangle \oplus \Z[x, q^{\pm }] /\langle x^8-q^5\rangle.
    \end{align*}
    
\end{enumerate}

Thus, in conclusion, \begin{align*}
QEll_{E_7}(S^4)  = &K_{\Lambda_{E_7}(1)}((S^4)^1)  \times  K_{\Lambda_{E_7}(-1)}(S^4)^{-1} \times K_{\Lambda_{E_7}(i)} ((S^4)^i) \\
&\times 
 K_{\Lambda_{E_7}(s)} ((S^4)^s)  \times  K_{\Lambda_{E_7}(-s)} ((S^4)^{-s}) \times K_{\Lambda_{E_7}(r)} ((S^4)^{r}) \\
 &\times    K_{\Lambda_{E_7}(t)} ((S^4)^{t})   \times K_{\Lambda_{E_7}(-t)} ((S^4)^{-t})\\
    \cong &(RE_7 \oplus RE_7)\otimes \mathbb{Z}[q^{\pm}] \\
    &\times (R(T_7)\oplus R (T_7) \oplus R_{[\widetilde{(T_7)}_{\rho}]} (T_7) \oplus R_{[\widetilde{(T_7)}_{\rho}]} (T_7)) \otimes \Z[q^{\pm}]\\
    &\times \Z[x, q^{\pm }] /\langle x^8-q^2\rangle \oplus \Z[x, q^{\pm }] /\langle x^8-q^2\rangle \\
    & \times \Z[x, q^{\pm }] /\langle x^6-q\rangle \oplus \Z[x, q^{\pm }] /\langle x^6-q\rangle \\
    & \times \Z[x, q^{\pm }] /\langle x^6-q^4\rangle \oplus \Z[x, q^{\pm }] /\langle x^6-q^4\rangle \\
    & \times  \Z[x, q^{\pm }] /\langle x^4-q\rangle \oplus \Z[x, q^{\pm }] /\langle x^4-q\rangle \\
    & \times \Z[x, q^{\pm }] /\langle x^8-q\rangle \oplus \Z[x, q^{\pm }] /\langle x^8-q\rangle \\
    & \times \Z[x, q^{\pm }] /\langle x^8-q^5\rangle \oplus \Z[x, q^{\pm }] /\langle x^8-q^5\rangle, 
\end{align*}  where $\rho$ is the sign representation of $\Z/2$.

\end{example}

\begin{example} \label{E8S4}

In this example we compute $QEll_{E_8}(S^4)$, where  $E_8$ is the binary icosahedral group. 
A presentation of this group is $$\langle r, s, t \mid (st)^2= s^3= t^5 = -1.\rangle. $$
The cardinality of $E_8$ is 120.
In this example, 
we use $\tau$ to denote $\frac{1+\sqrt{5}}{2}$ and   $\sigma$ to denote the number $\frac{1-\sqrt{5}}{2}$.

By \cite[page 7635, Table 1]{KocaAKo} and direct computation, 
we obtain a list of the representatives of the conjugacy classes of $E_8$, the centralizers of each representative, and the corresponding fixed point spaces in Figure \ref{E8:conj:c:fps}.

\begin{figure}
\begin{center}
\begin{tabular}{|c | c | c | } 
 \hline Representatives $\xi$  &Centralizers &Fixed point spaces\\
 of Conjugacy classes & $C_{E_8}(\xi)$ & $(S^4)^{\xi}$
 \\ \hline
$1$ & $ E_8$ & $S^4$\\
$-1$ & $E_8$ & $ S^0$\\
$y_3:= \frac{1}{2}(\tau + i + \sigma k)$ & $\langle  y_3 \rangle \cong \mathbb{Z}/10$ & $S^0$\\
$y_4:= y_5^2 = \frac{1}{2}(-\tau + \sigma i-j)$ & $\langle  y_5 \rangle \cong \mathbb{Z}/10$ & $S^0$\\
$y_5:= \frac{1}{2}(\sigma + i + \tau j)$ & $\langle  y_5 \rangle \cong \mathbb{Z}/10$ & $S^0$ \\
$y_6:= y_3^2 = \frac{1}{2}(-\sigma + \tau i -k)$ & $\langle  y_3 \rangle \cong \mathbb{Z}/10 $ & $S^0$ \\
$y_7:= \frac{1}{2}(1 + i +  j +k)$ & $\langle  y_7 \rangle \cong \mathbb{Z}/6$  & $S^0$ \\
$y_8:= y_7^2=\frac{1}{2}(- 1  + i + j + k)$ & $\langle  y_7 \rangle \cong \mathbb{Z}/6$  & $S^0$ \\
$y_9:= i $  &$ \langle  y_9 \rangle \cong \mathbb{Z}/4$  & $S^0$ \\
 \hline
\end{tabular} \caption{Conjugacy classes, centralizers and fixed point spaces} \label{E8:conj:c:fps}
\end{center}
\end{figure}

Then we compute the factor of $QEll_{E_8}(S^4)$ corresponding to each conjugacy class of $E_8$ one by one.

\begin{enumerate}
    \item For the conjugacy class of $1$, we have
    \begin{align*}K_{\Lambda_{E_8}(1)}((S^4)^1) &\cong K_{E_8\times \mathbb{T}} (S^4) \cong K_{E_8}(S^4)\otimes R\mathbb{T} \\
    &\cong K_{E_8}(S^0) \otimes \mathbb{Z}[q^{\pm}] \cong (RE_8 \oplus RE_8)\otimes \mathbb{Z}[q^{\pm}].\end{align*}

\item Then we consider the  conjugacy class of $-1$. 
We have the commutative diagram with both horizontal sequences exact.
\begin{equation}
\xymatrix{ 0\ar[r] &\mathbb{Z}/2 \ar[r] \ar@{=}[d] &E_8 \ar[r]^{\pi} \ar@{^{(}->}[d] &T_8\ar[r] \ar@{^{(}->}[d] &0 \\ 
0\ar[r] &\mathbb{Z}/2 \ar[r] &Spin(3) \ar[r]^{\pi} &SO(3)\ar[r] &0, }
\end{equation} where
$T_8$ is the icosahedral group. 
Thus, by Lemma \ref{dcld2}, we have
\begin{align*}
K_{\Lambda_{E_8}(-1)} ((S^4)^{-1}) &\cong K_{\Lambda_{T_8}(1)}(S^0) \oplus K^{ [\widetilde{\Lambda_{T_8}(1)}_{\rho}]+*}_{\Lambda_{T_8}(1)}(S^0) \\
&\cong K_{T_8\times \mathbb{T}}(S^0) \oplus K^{ [\widetilde{(T_8\times \mathbb{T})}_{\rho}]+*}_{T_8\times \mathbb{T}}(S^0) \\
&\cong K_{T_8}(S^0)\otimes \Z[q^{\pm}] \oplus K^{ [\widetilde{(T_8)}_{\rho}]+*}_{T_8}(S^0)\otimes \Z[q^{\pm}] \\
&\cong (R(T_8)\oplus R (T_8) \oplus R_{[\widetilde{(T_8)}_{\rho}]} (T_8) \oplus R_{[\widetilde{(T_8)}_{\rho}]} (T_8)) \otimes \Z[q^{\pm}].
\end{align*} where $\rho$ is the sign representation of $\Z/2$.

\item 
    For the conjugacy class of \[y_3:= \frac{1}{2}(\tau + i + \sigma k), \]
  \begin{align*}
     K_{\Lambda_{E_8}(y_3)} ((S^4)^{y_3})  &\cong K_{\Lambda_{\mathbb{Z}/10}(1) }(S^0) \cong R\Lambda_{\mathbb{Z}/10}(1)  \oplus R\Lambda_{\mathbb{Z}/10}(1)  \\
      &\cong    \Z[x, q^{\pm }] /\langle x^{10}-q\rangle \oplus \Z[x, q^{\pm }] /\langle x^{10}-q\rangle.
    \end{align*}

\item 
    For the conjugacy class of \[y_4:= \frac{1}{2}(-\tau + \sigma i-j), \] since $y_4= y_5^2$, 
\begin{align*}
K_{\Lambda_{E_8}(y_4)} ((S^4)^{y_4})        & \cong K_{\Lambda_{\mathbb{Z}/10}(2) }(S^0) \cong R\Lambda_{\mathbb{Z}/10}(2)  \oplus R\Lambda_{\mathbb{Z}/10}(2)  \\
     & \cong    \Z[x, q^{\pm }] /\langle x^{10}-q^2\rangle \oplus \Z[x, q^{\pm }] /\langle x^{10}-q^2\rangle.
    \end{align*}

    \item     For the conjugacy class of \[y_5:= \frac{1}{2}(\sigma + i + \tau j),\]
\begin{align*}K_{\Lambda_{E_8}(y_5)} ((S^4)^{y_5}) 
        &\cong  K_{\Lambda_{\mathbb{Z}/10}(1) }(S^0) \cong R\Lambda_{\mathbb{Z}/10}(1)  \oplus R\Lambda_{\mathbb{Z}/10}(1)  \\
      & \cong    \Z[x, q^{\pm }] /\langle x^{10}-q \rangle \oplus \Z[x, q^{\pm }] /\langle x^{10}-q\rangle.
    \end{align*}

  \item      For the conjugacy class of \[y_6:= \frac{1}{2}(-\sigma + \tau i -k),\]  since $y_6= y^2_3$, 
\begin{align*} K_{\Lambda_{E_8}(y_6)} ((S^4)^{y_6}) 
        & \cong K_{\Lambda_{\mathbb{Z}/10}(2) }(S^0) \cong R\Lambda_{\mathbb{Z}/10}(2)  \oplus R\Lambda_{\mathbb{Z}/10}(2)  \\
      &\cong    \Z[x, q^{\pm }] /\langle x^{10}-q^2\rangle \oplus \Z[x, q^{\pm }] /\langle x^{10}-q^2\rangle.
    \end{align*}

    \item     For the conjugacy class of \[y_7:= \frac{1}{2}(1 + i +  j +k), \] 
 \begin{align*}
       K_{\Lambda_{E_8}(y_7)} ((S^4)^{y_7})  &\cong  K_{\Lambda_{\mathbb{Z}/6}(1) }(S^0) \cong R\Lambda_{\mathbb{Z}/6}(1)  \oplus R\Lambda_{\mathbb{Z}/6}(1)  \\
      &\cong   \Z[x, q^{\pm }] /\langle x^{6}-q \rangle \oplus \Z[x, q^{\pm }] /\langle x^{6}-q\rangle.
    \end{align*}

  \item      For the conjugacy class of \[y_8:= \frac{1}{2}(- 1  + i + j + k), \]  since $y_8 = y_7^2$, 
 \begin{align*}
      K_{\Lambda_{E_8}(y_8)} ((S^4)^{y_8})  &\cong K_{\Lambda_{\mathbb{Z}/6}(2) }(S^0) \cong R\Lambda_{\mathbb{Z}/6}(2)  \oplus R\Lambda_{\mathbb{Z}/6}(2)  \\
      &\cong    \Z[x, q^{\pm }] /\langle x^{6}-q^2\rangle \oplus \Z[x, q^{\pm }] /\langle x^{6}-q^2\rangle.
    \end{align*}

    \item     For the conjugacy class of \[y_9:= i,\] 
     \begin{align*}
       K_{\Lambda_{E_8}(y_9)} ((S^4)^{y_9}) &\cong  K_{\Lambda_{\mathbb{Z}/4}(1) }(S^0) \cong R\Lambda_{\mathbb{Z}/4}(1)  \oplus R\Lambda_{\mathbb{Z}/4}(1)  \\
      &\cong    \Z[x, q^{\pm }] /\langle x^{4}-q \rangle \oplus \Z[x, q^{\pm }] /\langle x^{4}-q\rangle.
    \end{align*}

\end{enumerate}

In conclusion, \begin{align*}
    QEll_{E_8}(S^4) =& K_{\Lambda_{E_8}(1)}((S^4)^1) \times K_{\Lambda_{E_8}(-1)}((S^4)^{-1}) \times  K_{\Lambda_{E_8}(y_3)} ((S^4)^{y_3}) \\ 
    &\times K_{\Lambda_{E_8}(y_4)} ((S^4)^{y_4})  \times K_{\Lambda_{E_8}(y_5)} ((S^4)^{y_5})  \times  K_{\Lambda_{E_8}(y_6)} ((S^4)^{y_6}) \\
    &\times K_{\Lambda_{E_8}(y_7)} ((S^4)^{y_7}) \times   K_{\Lambda_{E_8}(y_8)} ((S^4)^{y_8}) \times  K_{\Lambda_{E_8}(y_9)} ((S^4)^{y_9})\\
     \cong &  (RE_8 \oplus RE_8)\otimes \mathbb{Z}[q^{\pm}]  \\
     &\times (R(T_8)\oplus R (T_8) \oplus R_{[\widetilde{(T_8)}_{\rho}]} (T_8) \oplus R_{[\widetilde{(T_8)}_{\rho}]} (T_8)) \otimes \Z[q^{\pm}] \\ &\times 
      \Z[x, q^{\pm }] /\langle x^{10}-q\rangle \oplus \Z[x, q^{\pm }] /\langle x^{10}-q\rangle \\
      &\times  \Z[x, q^{\pm }] /\langle x^{10}-q^2\rangle \oplus \Z[x, q^{\pm }] /\langle x^{10}-q^2\rangle\\
      &\times  \Z[x, q^{\pm }] /\langle x^{10}-q \rangle \oplus \Z[x, q^{\pm }] /\langle x^{10}-q\rangle\\
      &\times  \Z[x, q^{\pm }] /\langle x^{10}-q^2\rangle \oplus \Z[x, q^{\pm }] /\langle x^{10}-q^2\rangle\\
      &\times \Z[x, q^{\pm }] /\langle x^{6}-q \rangle \oplus \Z[x, q^{\pm }] /\langle x^{6}-q\rangle\\
      &\times  \Z[x, q^{\pm }] /\langle x^{6}-q^2\rangle \oplus \Z[x, q^{\pm }] /\langle x^{6}-q^2\rangle\\
      &\times  \Z[x, q^{\pm }] /\langle x^{4}-q \rangle \oplus \Z[x, q^{\pm }] /\langle x^{4}-q\rangle,
\end{align*}  where $\rho$ is the sign representation of $\Z/2$.

\end{example}

\subsection{Twisted Quasi-elliptic cohomology of $4$-spheres acted on by finite subgroups of of $SU(2)$} \label{ex:twisted:qell:s4}

In this subsection we compute the twisted quasi-elliptic cohomology of a space $X$ acted on by a finite subgroup $G$ of $SU(2)$.

By 
 \cite[Section 5]{Epa_Ganter_2017PlatonicAA}, for any finite subgroup $G$ of $SU(2)$, $H^2(BG; U(1))=0$. Thus, for any finite subgroup $G$ of $SU(2)$, the target of the transgression map \[H^3(BG; U(1)) \longrightarrow \prod_{[g]} H^2(BC_{G}(g); U(1))\]  is zero, where $[g]$ goes over all the conjugacy classes in $G$. Note that the subgroup $C_G(g)$ for each $g\in G $ is still a finite subgroup of $SU(2)$.  
Thus, there is only one central extension of $G$ by the circle group  $\mathbb{T}$, which is the cartesian product $G\times \mathbb{T}$.
\[1\longrightarrow \mathbb{T} \longrightarrow G\times \mathbb{T}\longrightarrow G\longrightarrow 1.\]

To avoid confusion, we use the symbol\[\alpha\] to denote the only element $1$ in  each target group \[\prod_{[g]} H^2(BC_{G}(g); U(1))\cong \{1\}.\]

We have \begin{align*}
C^\alpha_G(g) &= C_G(g) \times \mathbb{T}; \\
\Lambda^\alpha_G(g) &= \Lambda_G(g) \times \mathbb{T}.
\end{align*} Then, each factor in the twisted quasi-elliptic cohomology $QEll^{\alpha+\ast}_G(X) $ 
\[K^{\alpha+\ast}_{\Lambda_{G}(g)}(X^g) \cong K^{\ast}_{\Lambda_{G}(g)}(X^g). \] Thus, we have the conclusion
\begin{proposition} \label{twistedQEll:X:SU(2)}
    \begin{equation} QEll^{\alpha+\ast}_G(X) \cong QEll^{\ast}_G(X). \end{equation}
\end{proposition}

So we have the corollary below especially for the twisted quasi-elliptic cohomology of $S^4$ acted on by a finite subgroup $G$ of $SU(2)$.
\begin{corollary}\label{twistedQEll:S4:SU(2)} The twisted quasi-elliptic cohomology of $S^4$ acted on by a finite subgroup $G$ of $SU(2)$ in the way as \eqref{gpact:def} is isomorphic to the untwisted theory, i.e.
    \begin{equation} QEll^{\alpha+\ast}_G(S^4) \cong QEll^{\ast}_G(S^4). \end{equation}
\end{corollary}

\begin{example}

In addition, we can compute the twisted version of the quasi-elliptic cohomology of $S^1$ in Section \ref{Ex:QEll:S1}. Explicitly, 
\begin{itemize}
    \item For $S^1$ acted on by $\Z/N$ via the rotation, \[QEll^{\alpha+ \ast}_{\Z/N}(S^1) \cong  \Z[q^{\pm}],\] for any twist $\alpha \in H^3(B\Z/N; U(1))$.
    \item For $S^1$ acted on by $\Z/2$ via the reflection, \[QEll^{\alpha+ \ast}_{\Z/2}(S^1) \cong (\Z[q^{\pm}, x]/ \langle x^2-q\rangle \oplus \Z[q^{\pm}, x]/\langle x^2-q\rangle )\times \Z[q^{\pm}],\] for any twist $\alpha \in H^3(B\Z/2; U(1))$.
\end{itemize}
    
\end{example}

\appendix

\section{Corollaries of {\'A}ngel-G{\'o}mez-Uribe Decomposition Formula} \label{Cor:decomp:dcl}

In this section, we prove some corollaries of \cite[Theorem 3.6, Corollary 3.7]{ngel2017EquivariantCB} that are applied  in  Section \ref{twistedqell}.
The corollaries all apply to compact Lie groups.

\begin{lemma} 
Let $Q$ and $G$ be compact Lie groups. And we have a short exact sequence \[ 1 \longrightarrow \Z/2  \buildrel{l}\over\longrightarrow G \buildrel{\pi}\over\longrightarrow Q \longrightarrow 1 \] and $l(A)$ is contained in the center of $G$. Let $X$ be a $G$-space with $l(\Z/2)$ acting on it trivially.
Then, we have the isomorphism
\[K^*_{G}(X)\cong K^*_{Q}(X)\oplus K^{[\tilde{Q}_{sign}]+*}_{Q}(X)\]
\label{dcl}
\end{lemma}
\begin{proof}
As given in \cite[Section 2.1]{ngel2017EquivariantCB}, there is a well-defined $G$-action on the irreducible $\Z/2$-representations by \[(g\cdot \rho)(a) = \rho (g^{-1} a g) = \rho (a),\] for any $g\in G$, $a\in \Z/2$ and any irreducible $\Z/2$-representation $\rho$.  

Since the irreducible representations $(\rho, V_{\rho})$ of $\mathbb{Z}/2$ are all 1-dimensional and fixed by $G$,  
 the group $PU(1)$ of inner automorphism of $U(1)$ consists of exactly one element, i.e. the identity map. As in \cite[(1), page 6]{ngel2017EquivariantCB}, we use the symbol $\tilde{G}_{\rho}$ to denote the pullback  
\[ \xymatrix{\tilde{G}_{\rho} \ar[r]^{\tilde{f}} \ar[d]^{\tau_{\rho}} &U(1) \ar[d] \\
G\ar[r] &PU(1)} \] We have $\tilde{G}_{\rho} = G\times U(1)$. The map $\tau_{\rho}$ is the projection map to $G$ and $\tilde{f}$ is the projection map to $U(1)$. 

Then we consider the commutative diagram
\[ \xymatrix{\Z/2 \ar[r]^{\tilde{l}} \ar[d]_{=} &\tilde{G}_{\rho} \ar[d] \\ \Z/2 \ar[r]^{l} &G}\] where $\tilde{l}$ is defined to be the unique map so that $\rho=\tilde{f}\circ \tilde{l}$. Thus, 
$\tilde{l}$ is the product of $l$ and the representation $\rho$. 

Then
we consider the commutative diagram \begin{equation} \label{qrho:plbk}
    \xymatrix{  &\Z/2 \ar[d]^{\tilde{l}} & \Z/2\ar[d]^{l} \\ \T \ar[r] & \tilde{G}_{\rho} \ar[d]^{\tilde{\pi}} \ar[r] &G \ar[d]^{\pi} \\
\T\ar[r]^{i_Q} & \tilde{Q}_{\rho} \ar[r]^{p_Q} & Q   }
\end{equation} where the vertical sequences are both exact, the horizontal sequences are $\T$-central extensions and the square is a pullback square. 
If $\rho$ is the trivial representation of $\Z/2$, $\tilde{Q}_{\rho} \cong Q\times \T$ and, by \cite[Proposition 2.2]{ngel2017EquivariantCB}, $\rho$ extends to an irreducible representation of $G$. However, if $\rho$ is the sign representation of $\Z/2$,  it may not extend to the whole group $G$. And the central extension \[ \xymatrix{1\ar[r] & \T \ar[r]^{i_Q} & \tilde{Q}_{\rho}  \ar[r]^{p_Q}
& Q \ar[r] &1}\] may correspond to a nontrivial element $[\tilde{Q}_{\rho}]$  in $H^3(BQ; \mathbb{Z})$.




By \cite[Corollary 3.7]{ngel2017EquivariantCB},
\begin{equation}
K^*_{G}(X) \cong \bigoplus_{\rho\in G \slash Irr(\Z/2)} K^{[\tilde{Q}_{\rho}] +\ast}_{Q_{\rho}} (X), \label{decomp:z2:cor}\end{equation}
where $\rho$ runs over representatives of the orbits of the $G$-action on the set of isomorphism classes of irreducible $\Z/2$-representations, i.e. $\{1, sign\}$, the action of \[Q_{\rho} = G_{\rho}/(\Z/2)\] on $X$ is induced from the $G$-action on $X$, and $G_{\rho}$ is the isotropy group of $\rho$ under the $G$-action. Note that the two irreducible $\Z/2$-representations  are fixed by the $G$-action and $G_{\rho} = G$ for each $\rho$. Thus, the isomorphism \eqref{decomp:z2:cor} is exactly 
\[K^*_{G}(X)\cong K^*_{Q}(X)\oplus K^{[\tilde{Q}_{sign}]+*}_{Q}(X)\] In each component, the $Q$-action on $X$ is induced from the quotient map $\pi: G\longrightarrow Q$.

\end{proof}

Let
\[ 1 \longrightarrow \Z/2  \buildrel{l}\over\longrightarrow G \buildrel{\pi}\over\longrightarrow Q \longrightarrow 1 \] be a short exact sequence of compact groups and $l(A)$ is contained in the center of $G$.
For any torsion element $\alpha$ in $G$, we have the short exact sequence
\[0\longrightarrow \mathbb{Z}/2\buildrel{i}\over\longrightarrow \Lambda_{G}(\alpha)\buildrel{[\pi, id]}\over\longrightarrow \Lambda_{Q}(\pi(\alpha))\longrightarrow 0\] with \[i(\Z/2)=\{[\beta, 0] \in \Lambda_G(\alpha) \mid \beta\in l(\Z/2)\}  \] contained in the center of $\Lambda_{G}(\pi(\alpha))$. 
In addition, $X^\alpha$ is a $ \Lambda_{G}(\alpha)$-space with the action  by 
$i(\Z/2)$ trivial.

Especially, if $\alpha$ is the nontrivial element in $l(\Z/2)$, then $\pi(\alpha) = 1$ and we have \[\Lambda_{Q}(\pi(\alpha))\cong Q\times \T; \quad
\widetilde{\Lambda_{Q}(\pi(\alpha))}_{\rho}  \cong  \tilde{Q}_{\rho}\times \T. \]
In this case, the central extension  \[ \xymatrix{1\ar[r] & \T \ar[r] & \widetilde{\Lambda_{Q}(\pi(\alpha))}_{\rho}  \ar[r]
& \Lambda_{Q}(\pi(\alpha)) \ar[r] &1} \] is completely determined by \[ \xymatrix{1\ar[r] & \T \ar[r]^{i_Q} & \tilde{Q}_{\rho}  \ar[r]^{p_Q}
& Q\ar[r] &1}, \] thus, by the 3-cocycle $[\tilde{Q}_{\rho}]$.

Then we can get a corollary of Lemma \ref{dcl}.
\begin{lemma} Let
\[ 1 \longrightarrow \Z/2  \buildrel{l}\over\longrightarrow G \buildrel{\pi}\over\longrightarrow Q \longrightarrow 1 \] be a short exact sequence of compact groups and  $l(A)$ is contained in the center of $G$. Let $X$ be a $G$-space with $l(\Z/2)$ acting on
it trivially. For any torsion element $\alpha$ in $G$,
we have the isomorphism
\[ K^*_{\Lambda_G(\alpha)}(X^{\alpha}) \cong K^*_{\Lambda_{Q}(\pi(\alpha))}(X^{\alpha})\oplus K^{[\widetilde{\Lambda_{Q}(\pi(\alpha))}_{sign}]+*}_{\Lambda_{Q}(\pi(\alpha))}(X^{\alpha}).\]

Especially,  if $\alpha$ is the nontrivial element in $l(\Z/2)$, \[K^*_{\Lambda_G(\alpha)}(X^{\alpha}) \cong K^*_{Q}(X^{\alpha})\otimes \Z[q^{\pm}]\oplus K^{[\tilde{Q}_{sign}]+*}_{Q}(X^{\alpha}) \otimes \Z[q^{\pm}]. \]

\label{dcld2}
\end{lemma}

\section*{Declaration}
The author declares that she has no conflicts of interest.

\bibliographystyle{amsalpha}
\bibliography{twistedQEll.bib}

\end{document}